\documentclass[11pt]{article}
\usepackage{amsmath, amssymb}
\usepackage{graphicx}
\usepackage{geometry}
\usepackage{xcolor}
\usepackage{authblk} 
\usepackage{hyperref}
\usepackage{cleveref} 
\usepackage{amsthm}
\usepackage{enumitem} 
\usepackage{booktabs}
\usepackage{newtxtext,newtxmath}
\usepackage{setspace}
\usepackage{makecell}
\usepackage{subcaption}
\usepackage{multirow}
\usepackage{titlesec}
\usepackage{etoolbox}

\titleformat{\section}
{\normalfont\large\bfseries}
{\thesection}{1em}{}

\usepackage{caption}
\newtheorem{theorem}{Theorem}

\newtheorem{assumption}{Assumption}
 
\newtheorem{proposition}{Proposition}
\newtheorem{corollary}{Corollary} 
\newtheorem{example}{Example}

\renewcommand{\d}{\mathrm{d}}   

\hypersetup{colorlinks=true, allcolors=blue}  
\makeatletter
\patchcmd{\@maketitle}
  {\vskip 2em}
  {\vskip -4.5em}
  {}{}
\makeatother

\title{\Large \MakeUppercase{Pathwise skew-symmetric discretisation for SDEs with superlinear drift}}
\author[1]{Yuga Iguchi}
\author[2]{Samuel Livingstone}
\author[3]{Giorgos Vasdekis}
\author[1]{Rui-Yang Zhang}

\affil[1]{School of Mathematical Sciences, Lancaster University.}
\affil[2]{Department of Statistical Science, University College London.}
\affil[3]{School of Mathematics, Statistics and Physics, Newcastle University.}

\begin{document}
\maketitle 
\begin{abstract}
    The skew-symmetric discretisation has recently been proposed as a new robust simulation method for weakly approximating stochastic differential equations (SDEs) with non-globally Lipschitz drift. This work develops a pathwise version of the scheme by representing the noise increment as a skew-normal distribution and coupling it with the driving Brownian increments, thereby enabling its use in the multilevel Monte Carlo (MLMC) framework. Under suitable conditions, we establish strong convergence of order 1/2 in $L^2$. Subsequently, the associated MLMC estimator is shown to have computational complexity $\mathcal{O} \bigl(\varepsilon^{-2} (\log (1/\varepsilon))^2 \bigr)$ to achieve a mean-squared error $\varepsilon^2$. We then analytically compare the proposed scheme with the tamed Euler scheme, another benchmark for robust discretisation. Under a strong inward-drift regime with the current state being far from the stable region, we show that the probability of moving in the wrong direction tends to vanish in the skew-symmetric scheme, whereas the tamed Euler scheme makes such moves with a non-trivial probability. Furthermore, in the MLMC setting, employing a one-dimensional stochastic Ginzburg-Landau model, we specify the range of step sizes for which the asymptotic variance of the coupled level difference obtained via the pathwise skew-symmetric scheme is lower than that obtained via the tamed Euler scheme. Numerical experiments on several model examples support the theoretical rate of strong convergence and demonstrate the stability and effectiveness of the resulting MLMC in the superlinear drift setting.
\end{abstract}
\textbf{Key  words.} Stochastic differential equations; strong convergence; skew-normal distribution; unadjusted Barker algorithm; tamed Euler scheme; multilevel Monte Carlo.  


\section{Introduction} \label{sec:intro}

Simulating and estimating expectations of functionals of a stochastic differential equation (SDE), either at a fixed time or at equilibrium, is a basic task in molecular dynamics \cite{krauth2006statistical}, mathematical finance \cite{karatzas2014brownian} and computational statistics \cite{Roberts.Tweedie:96}. Expectations are typically approximated by simulating sample paths with an explicit scheme to form Monte Carlo averages. The classical Euler--Maruyama scheme is well understood when the coefficients are globally Lipschitz, but the scheme is also known to be unstable when the drift grows superlinearly, as moments may diverge in finite time, meaning the scheme can fail to be ergodic in the long-time regime (e.g. \cite{Hutzenthaler.Jentzen.Kloeden:2011, Roberts.Tweedie:96}). This has motivated a range of stable, explicit schemes, notably the tamed/truncated/adaptive Euler methods (e.g. \cite{Fang:20, hutzenthaler.jentzen.lloeden:12, mao:15, sabanis:13}).

A recent alternative is the skew-symmetric scheme of \cite{skew:ima}, in which the SDE transition kernel is approximated using a skew-symmetric distribution \cite{azzalini2013skew}. A skew-symmetric increment is simulated by first generating a symmetric Gaussian jump informed by only the volatility of the SDE, and then skewing it in the direction of the drift.  As the jump magnitude is independent of the drift, large values of the latter cannot destabilise the trajectory, meaning the scheme converges weakly with order one and remains ergodic under a one-sided Lipschitz condition. Applied to the overdamped Langevin dynamics, the skew-symmetric scheme recovers the unadjusted Barker sampling algorithm \cite{livingstone.zanella:22,skew:ima}.

\subsection{Contributions and structure of the paper}
Our contributions are as follows:
\begin{itemize}[leftmargin=0.5cm]
\item We give a pathwise formulation of the skew-symmetric scheme, coupling the increments with the SDE's Brownian path (Section \ref{sec:construction-pathwise-ss}). The resulting scheme is fully explicit and, aside from the choice of a fixed time grid, requires no additional tuning, such as the design of an adaptive step size or the selection of a truncation threshold. 
\item We show that the pathwise scheme attains a strong error in mean square of order $1/2$ in the superlinear drift setting (Section \ref{sec:strong.convergence}).
%
\item Building on this, we embed the scheme in the multilevel Monte Carlo (MLMC) framework (Section \ref{sec:multi.level.def}) and bound the cost of achieving a mean-square error $\varepsilon^2$ by $\mathcal{O} \bigl(\varepsilon^{-2} (\log (1 /  \varepsilon) )^2 \bigr)$.
%
\item We present a detailed comparative analysis with the tamed Euler scheme (Section \ref{sec:comparative.analysis}). In strongly inward large state regimes, we show that the probability of moving in the wrong direction approaches $0$ for the pathwise skew-symmetric scheme whereas it remains non-trivial for the tamed Euler scheme. For the stochastic Ginzburg-Landau model \eqref{eq:SGL}, we further characterise the range of step sizes for which the skew-symmetric scheme yields a smaller MLMC coupling variance.

\item We support our theoretical results through numerical simulations, demonstrating the strong convergence rate (Section \ref{sec:numerical.strong.convergence.rate}) and comparing the resulting MLMC estimator with that based on the tamed Euler scheme (Section \ref{sec:numerical.mlmc}). We observe that the pathwise skew-symmetric scheme has favourable robustness with respect to model and MLMC settings with multiplicative volatility and superlinear drift.

\end{itemize}

To motivate further analysis, we demonstrate the pathwise robustness of the scheme for the stochastic Ginzburg-Landau model with multiplicative noise, as defined in \eqref{eq:SGL}. The drift exhibits superlinear behaviour arising from the double-well potential $ U(x) = x^4/4 - \beta x^2/2 $. \cref{fig:intro_SGL} shows the trajectories generated by the pathwise skew-symmetric and tamed Euler schemes. 
While the true trajectories are expected to move downwards (towards the well at 1) due to the inward-pointing drift, we observe that some trajectories of the tamed Euler scheme fail to capture this correct downward trend at both step sizes.  In contrast, all the sample paths realised by the skew-symmetric scheme track the correct direction. A formal analysis for this observation is given in Section \ref{sec:inward_drift}, and more experiments are provided in Section \ref{sec:experiments}.
%


\begin{figure}
\centering
\vspace{-10pt}  
\includegraphics[width=0.8\linewidth]{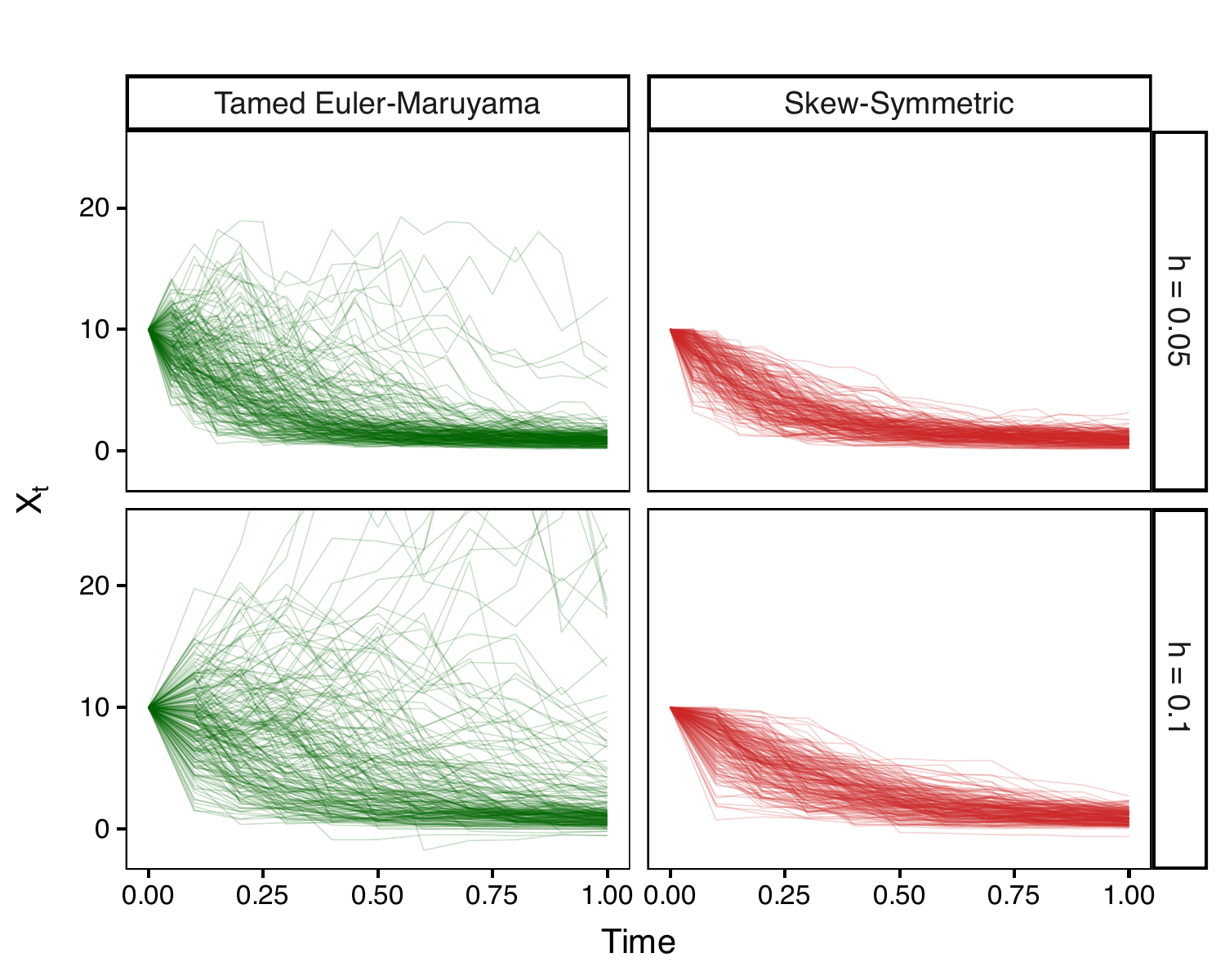}
\vspace{-8pt}   
\caption{200 trajectories of the stochastic Ginzburg-Landau model \eqref{eq:SGL} with $\beta = \gamma = 1$, generated by the tamed Euler (left) and pathwise skew-symmetric (right) schemes with step size $h = 0.05$ and $h = 0.1$, using the same underlying Brownian motion. The initial state is set as $X_0 = 10$. We observe that some tamed Euler trajectories are not moving in the correct direction (towards one), whereas such issues are not present for skew-symmetric trajectories.}
\vspace{-6pt}   
\label{fig:intro_SGL}
\end{figure}

\subsection{Set-up and notation}\label{sec:notation}
Our objective is to approximately simulate the $d$-dimensional stochastic differential equation (SDE)
\begin{align} \label{eq:sde}
\d X_t =  
V_{0} (X_t) \d t 
+ \sum_{i = 1}^{d_W} V_{i} (X_t)  \d W_{i, t}, \qquad  X_0 \in \mathbb{R}^d,
\end{align} 
where $W_t = (W_{1, t}, \ldots, W_{d_W, t}), \, t \ge 0$ is a $d_W$-dimensional standard Brownian motion and $V_{i}: \mathbb{R}^{d} \to \mathbb{R}^{d}, \ i = 0, 1, \ldots, d_W$. 
The matrix $V=[V_1,\ldots,V_{d_W}]$ is assumed to satisfy a uniform ellipticity condition, namely that $a(x):=V(x) \cdot V^T(x)$ is uniformly positive-definite in $x\in\mathbb R^d$.  
For the SDE (\ref{eq:sde}), we write the expectation under the law of $X$ starting from $x \in \mathbb{R}^d$ as $\mathbb{E}_x [\cdot]$. Also, $|\cdot|$ denotes the Euclidean norm on $\mathbb{R}^m$ for $m \in \mathbb{N}$, with the usual interpretation as absolute value for scalars.

Let $C^l(\mathbb{R}^d)$ denote the set of $l$-times differentiable functions
$f:\mathbb{R}^d \to \mathbb{R}$ with continuous derivatives up to order $l$.
We also denote
\[
C_P^{l,m}(\mathbb{R}^d)
=
\left\{
f \in C^l(\mathbb{R}^d)
:
\exists C>0 \ \text{s.t.} \ 
\lvert f^{(k)}(x)\rvert
\le C\bigl(1+|x|^m\bigr),
\ \forall x\in\mathbb{R}^d,\ k\le l
\right\},
\]
where $f^{(k)}$ denotes any derivative of order $k$, as well as
\[
C_P^l(\mathbb{R}^d)
=
\bigcup_{m\in\mathbb{N}} C_P^{l,m}(\mathbb{R}^d),
\qquad
C_P^\infty(\mathbb{R}^d)
=
\bigcap_{l\in\mathbb{N}} C_P^l(\mathbb{R}^d).
\]

\section{Pathwise skew-symmetric scheme}
\label{sec:main}
\subsection{Distributional skew-symmetric scheme -- review}\label{sec:review.ss}

The scheme of \cite{skew:ima} generates a solution to SDE \eqref{eq:sde} at fixed step size $h>0$ by replacing the additive drift increment of Euler--Maruyama with a probabilistic \emph{sign flip} applied to the Gaussian increment, where the flipping probability encodes the drift.
Given i.i.d. $\nu_n = (\nu_{1,n}, \dots, \nu_{d_W,n}) \sim N(0, I_{d_W})$, to propose the next step from $\bar{X}_n$ following the skew-symmetric scheme, one draws independent binary signs $b_{i,n} \in \{+1, -1\}$ with
\begin{equation}
  b_{i,n} =
  \begin{cases}
    +1 & \text{with probability } p_i(\bar{X}_n, \nu_n), \\
    -1 & \text{otherwise},
  \end{cases}
\end{equation}
with probabilities $p_i$ to ensure the dynamics are correct, and updates following
\begin{equation}
    \bar{X}_{n+1}
  = \bar{X}_n + h^{1/2}\sum_{i=1}^{d_W} b_{i,n}\, V_i(\bar{X}_n)\, \nu_{i,n}.
  \label{eq:ss-update}
\end{equation}
Geometrically, each column of $V$ is treated as a basis direction. A Gaussian jump is proposed along each, then either accepted or reflected according to the probability $p_i$. We impose a condition on the derivative of $p_i$ with respect to $\nu_i$ at $\nu_i=0$ to guarantee that the scheme is of weak-order 1. In practice, $p_i$ can be constructed as 
\begin{equation*}
  p_i(x, \nu) = F\left( \nu_i\, h^{1/2}\, \frac{\alpha_i(x)}{2 f(0)} \right),
\end{equation*}
with $f$ and $F$ being the density and cumulative distribution function (CDF) of any centred and symmetric random variable on $\mathbb{R}$ with $f(0) \neq 0$, and
\begin{equation}\label{def:alpha}
    \alpha(x) = (\alpha_1(x) , \dots , \alpha_{d_W}(x)) := V_0^T(x) \cdot a^{-1}(x) \cdot V(x).
\end{equation}
While many symmetric distributions will result in a weak-order 1 scheme, choosing the Gaussian density and CDF will allow us to couple the jump of the scheme with the Brownian motion driving the SDE in a natural way \cite{Hen:86}. This coupling leads to a strong convergence result. In this article, we thus take $p_i$ to be of the form
\begin{equation} \label{eq:p-construction:Gaussian}
  p_i(x, \nu) = \Phi\!\left( \nu_i\, h^{1/2}\, \frac{\alpha_i(x)}{2 \phi(0)} \right),
\end{equation}
with $\Phi$ and $\phi$ the standard Gaussian CDF and density on $\mathbb{R}$.  

\subsection{Construction of the pathwise skew-symmetric scheme} \label{sec:construction-pathwise-ss}

By selecting the Gaussian densities for the general skew-symmetric updates of \eqref{eq:ss-update}, we can now construct a pathwise skew-symmetric scheme. The key ingredient of our proposed pathwise scheme is that a skew-normal distribution (e.g. $b_{i,n} \nu_{i,n}$) can be rewritten using two independent Gaussians \cite{Hen:86}, which enable successful couplings.

Here, the pathwise scheme involves the Brownian motion $\{W_t \}_{t \ge 0}$ and another independent $d_W$-dimensional Brownian motion $\{ \widetilde{W}_t \}_{t \ge 0}$, instead of the random variables $b_{n} = (b_{1, n}, \ldots, b_{d_W, n}) \in \{-1,+1\}^{d_W}$ used in the distributional sense for \eqref{eq:ss-update}. 

To begin, note that for $\nu_n = (\nu_{1, n}, \ldots, \nu_{d_W, n}) \sim N(0,I_{d_W})$ and $(h, x) \in (0, \infty) \times \mathbb{R}^d$, the random variables $\xi_{i, n} \equiv b_{i, n}  \times \nu_{i, n}, \, i = 1, 2, \ldots, d_W$ under the Gaussian flipping probability \eqref{eq:p-construction:Gaussian} admit the probability density
\begin{align}
2 \phi (\xi_{i, n}) \Phi (\tilde{\alpha}_i (h, x) \xi_{i, n}) 
\end{align} 
where we have set
\begin{align} \label{eq:skewness}
\tilde{\alpha}_i (h,x) = \frac{h^{1/2}}{2 \phi(0)} \alpha_i (x), 
\end{align} 
with $\alpha_i$ as in (\ref{def:alpha}); see e.g. \cite{Hen:86} for a derivation.
%
%
This means that the random variable $\xi_{i, n}$ has a skew-normal distribution with skew parameter $\tilde{\alpha}_i (h,x)$. It is then shown in \cite{Hen:86} that $\xi_{i, n}$ can be represented as 
\begin{align}
\xi_{i, n} := \frac{1}{\sqrt{1 + \tilde{\alpha}_{i} (h,x)^2}} \nu_{i, n} 
+ \frac{\tilde{\alpha}_i (h, x)}{\sqrt{1 + \tilde{\alpha}_i (h,x)^2}} |\eta_{i,n}|,  
\end{align} 
with $\eta_{i,n} \sim N(0,1)$ independent of $\nu_{i,n}$. Using the Brownian motions $\{(W_t, \widetilde{W}_t)\}_{t \ge 0}$, we therefore define the pathwise skew-symmetric scheme with a fixed step size $h > 0$ and initial condition $\widetilde X_0=x$ by
%
\begin{align} \label{eq:pathwise_ss}
\widetilde{X}_{n+1} = \widetilde{X}_{n} + \sum_{i = 1}^{d_W} 
{V}_{i} (\widetilde{X}_{n}) 
\left( \frac{\Delta W_{i, n} + \tilde{\alpha}_i (h, \widetilde{X}_{n})| \Delta \widetilde{W}_{i, n} | }{\sqrt{1 + \tilde{\alpha}_i (h, \widetilde{X}_{n})^2}}  \right)
\end{align}  
for $n \in \mathbb{N} \cup \{0\}$, 
where $\Delta{W}_{i, n} = W_{i, (n+1)h} - W_{i, nh}$ and $\Delta \widetilde{W}_{i, n} = \widetilde{W}_{i, (n+1)h} - \widetilde{W}_{i, nh}$. 
%


\subsection{Strong error analysis}\label{sec:strong.convergence}
For the proposed scheme \eqref{eq:pathwise_ss}, we now state the mean-square convergence, particularly under the setting of superlinear drift. We follow the argument of \cite{Tret:13} and first establish the local errors, which lead to the rate of mean-square error $1/2$. We impose the following assumptions on the SDE (\ref{eq:sde}). 

\begin{assumption}[Drift] \label{assump:drift}
There exist constants $C_1, C_2 > 0$ and $q \ge 1$ such that for all $x, y \in \mathbb{R}^d$, the drift $V_{0}$ satisfies
\begin{align}
& \bigl( x - y \bigr)^\top \bigl( V_0 (x) - V_0 (y) \bigr) 
\le C_1 |x - y|^2; \\[0.2cm] 
& | V_{0} (x) - V_{0} (y) |^2  
\le C_2 \bigl(1 + |x|^{2q} + |y|^{2q} \bigr) | x - y |^2. 
\end{align} 
\end{assumption}

\begin{assumption}[Volatility] \label{assump:vol}
\begin{itemize}
\item[I.] The matrix $a (x) \equiv V (x) V^\top (x), \, V = [V_1, \ldots, V_{d_W}]$ is positive-definite uniformly in $x \in \mathbb{R}^d$. 
\item[II.] There exists a constant $C > 0$ such that for all $x, y \in \mathbb{R}^d$,   
\begin{align}
\sum_{i = 1}^{d_W} | V_{i} (x) 
- V_{i} (y) |^2 \le C | x - y|^2. 
\end{align}
\end{itemize}
\end{assumption}

\begin{assumption}[Regularity] \label{assump:deriv_poly}
For any $(i, j) \in  \{0, 1, \ldots, d_W \} \times \{1, \ldots, d\}$, the $j$th component $V_{i, j}$ of $V_i$ is in $C_P^\infty (\mathbb{R}^d)$. 
%
\end{assumption} 

\begin{proposition}[Local error control] \label{prop:one_step_s}
Let Assumptions \ref{assump:drift}, \ref{assump:vol} and \ref{assump:deriv_poly} hold. For any $r \ge 1$, there are constants $C > 0$, $q \ge 1$ and $h_0 > 0$ such that for all $h \le  h_0$ and $x \in \mathbb{R}^d$, 
\begin{gather}
\bigl|\mathbb{E}_x  \bigl[ X_h \bigr] - \mathbb{E}_x  \bigl[ \widetilde{X}_1  \bigr]  \bigr| 
\le C ( 1 + |x|^{2q} )^{1/2} \times h^2;  \label{eq:local_mean} \\[0.2cm]
\Bigl\{  \mathbb{E}_x \bigl[ | X_h - \widetilde{X}_1|^{2r}  \bigr] 
\Bigr\}^{{1}/{2r}} 
\le  
C(  1 +  | x |^{2 rq}  )^{{1}/{2r}}  \times h. \label{eq:local_Lp}
\end{gather}
\end{proposition}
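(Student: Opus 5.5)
Both bounds come from comparing each side with the Euler--Maruyama step $\bar X_1 := x + V_0(x) h + \sum_i V_i(x)\Delta W_{i,0}$, using the same Brownian increments. For the SDE side I will use the standard Itô--Taylor/Euler estimates under Assumptions \ref{assump:drift}--\ref{assump:deriv_poly}. The one-sided Lipschitz condition together with polynomial growth gives uniform-in-time moment bounds $\mathbb{E}_x[|X_t|^{p}] \le C(1+|x|^{p})$ for $t \le h_0$. Applying Itô's formula to $V_0(X_s)$ then gives $|\mathbb{E}_x[X_h] - x - V_0(x)h| \le C(1+|x|^{2q})^{1/2} h^2$, since the generator applied to $V_0$ has polynomial growth by Assumption \ref{assump:deriv_poly}. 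The same expansion, with BDG for the stochastic integrals, gives $\{\mathbb{E}_x|X_h - \bar X_1|^{2r}\}^{1/2r} \le C(1+|x|^{2rq})^{1/2r} h$. This uses the Lipschitz property of $V_i$ from Assumption \ref{assump:vol}.II and the bound $\int_0^h \{\mathbb{E}|X_s - x|^{2r}\}^{1/2r} ds = O(h^{3/2})$. So the proposition reduces to comparing $\widetilde X_1$ with $\bar X_1$.

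\textbf{Mean.} Write $\tilde\alpha_i = \tilde\alpha_i(h,x) = c\,h^{1/2}\alpha_i(x)$ with $c = 1/(2\phi(0))$, and note $\mathbb{E}|\Delta\widetilde W_{i,0}| = h^{1/2}\sqrt{2/\pi} = 2\phi(0)h^{1/2}$. Then
\begin{align*}
\mathbb{E}_x[\widetilde X_1] - x = \sum_i V_i(x)\,\frac{\tilde\alpha_i\,2\phi(0)h^{1/2}}{\sqrt{1+\tilde\alpha_i^2}} = h\sum_i V_i(x)\alpha_i(x)\,(1+\tilde\alpha_i^2)^{-1/2}.
\end{align*}
By \eqref{def:alpha}, $\sum_i V_i\alpha_i = V V^\top a^{-1} V_0 = V_0$. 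Hence the mean error equals $h\sum_i V_i\alpha_i\bigl(1-(1+\tilde\alpha_i^2)^{-1/2}\bigr)$. Since $1-(1+u^2)^{-1/2} \le u^2/2$, this is bounded by $C h\,|V_i||\alpha_i|\,h\alpha_i^2$. Uniform ellipticity makes $a^{-1}$ bounded, so $|\alpha_i| \le C|V_0||V_i|$, and everything grows polynomially. This gives $O(h^2)$ with a polynomial weight, which I absorb by choosing $q$ large enough. This yields \eqref{eq:local_mean}.

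\textbf{$L^{2r}$ bound.} I decompose
\begin{align*}
\widetilde X_1 - \bar X_1 = \sum_i V_i(x)\Bigl[\bigl((1+\tilde\alpha_i^2)^{-1/2}-1\bigr)\Delta W_{i,0} + \frac{\tilde\alpha_i|\Delta\widetilde W_{i,0}|}{\sqrt{1+\tilde\alpha_i^2}} - \alpha_i h\Bigr].
\end{align*}
The first term has $L^{2r}$ norm at most $C\tilde\alpha_i^2 h^{1/2} = O(h^{3/2})\cdot$poly. For the second term, using $|\sqrt{\cdot}|$ and boundedness of $(1+\tilde\alpha_i^2)^{-1/2}$, its $L^{2r}$ norm is at most $C|\tilde\alpha_i| h^{1/2} + |\alpha_i| h = O(h)\cdot$poly$(x)$. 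So there is no cancellation; both pieces are separately order $h$.

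\textbf{Main obstacle.} The difficulty is bookkeeping rather than any one step. The powers of $|x|$ must come out in the stated form $(1+|x|^{2rq})^{1/2r}$, with $q$ possibly enlarged relative to Assumption \ref{assump:drift}. They must also be uniform for $h \le h_0$ despite the superlinear drift. On the SDE side this needs the moment bound on $X$, which I take from the one-sided Lipschitz condition via Itô's formula on $|X_t|^{2p}$ and Gronwall. The scheme side involves only explicit Gaussians and needs nothing further.
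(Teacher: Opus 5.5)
Your proof is correct. It differs from the paper's mainly in how \eqref{eq:local_mean} is obtained.

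\textbf{The mean bound.} The paper computes nothing here. It notes that $\widetilde X_1$ has the same law as the distributional skew-symmetric step, which the paper calls $\bar X_1$ (you use that symbol for Euler--Maruyama instead). It then imports the local weak error estimate of \cite[Proposition 3.1]{skew:ima}. You prove the bound from scratch by inserting the Euler--Maruyama step. Two facts do the work: the identity $\sum_i V_i\alpha_i = VV^\top a^{-1}V_0 = V_0$, and $\mathbb{E}|\Delta\widetilde W_{i,0}| = 2\phi(0)h^{1/2}$. Together they show the scheme's mean increment equals $hV_0(x)$ up to the factors $(1+\tilde\alpha_i^2)^{-1/2} = 1+O(h)$. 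It\^o's formula applied to $V_0(X_s)$ then handles the SDE side; there, the moment bounds are what make the It\^o integrals true martingales. Your route is self-contained and shows why the normalisation $1/(2\phi(0))$ in $\tilde\alpha_i$ is exactly right. The paper's citation is shorter but hides this.

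\textbf{The $L^{2r}$ bound.} Here the two proofs use the same estimates, arranged differently. Your $X_h - \bar X_1$ is exactly the paper's $T_1 + T_2 - hV_0(x)$, and your $\bar X_1 - \widetilde X_1$ is exactly $T_3 + hV_0(x)$. Every piece is separately $O(h)$ in $L^{2r}$, so the Euler intermediary gains nothing for this bound. It does pay off for the mean, where the comparison has real content. For the drift piece, cite the polynomial-Lipschitz bound on $V_0$ from Assumption \ref{assump:drift}, or simply the crude bound $\|\int_0^h V_0(X_s)\,\d s\|_{2r} = O(h)$, rather than the Lipschitz property of the $V_i$. Your bound on the scheme side is simpler than the paper's. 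You use Minkowski's inequality over the finite sum in $i$ and bound $\|\tilde\alpha_i|\Delta\widetilde W_{i,0}|\|_{2r} = O(h)$ directly. The paper instead uses Rosenthal's inequality and splits $|\widetilde W_{i,h}|$ into its mean and a fluctuation, and neither is needed for fixed $d_W$.
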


The proof of \cref{prop:one_step_s} is postponed to Appendix \ref{sec:pf_prop}. 
\\ 

Using \cref{prop:one_step_s}, the main result of this section establishes an upper bound on the mean-square convergence of the pathwise skew-symmetric scheme (\ref{eq:pathwise_ss}) below.
\begin{theorem}[Strong convergence in mean square] \label{thm:main_strong}
Let Assumptions \ref{assump:drift}, \ref{assump:vol} and
\ref{assump:deriv_poly} hold, and fix $T>0$. Then, for any
$r\geq 1$, there exist constants $C>0$ and $q\geq 1$ such that,
for all $h\in(0,1)$, $x\in\mathbb{R}^d$, and
$k\in\mathbb{N} \cup\{0\}$ satisfying $kh\leq T$, the pathwise
skew-symmetric scheme defined in (\ref{eq:pathwise_ss}) satisfies
\begin{align}
\Bigl\{  \mathbb{E}_x \bigl[  |  X_{k h} - \widetilde{X}_{k} |^{2r} \bigr] \Bigr\}^{{1}/{2r}} \le C ( 1 +  |x|^{2rq})^{{1}/{2r}} \times h^{{1}/{2}}
\end{align}
where $C$ and $q$ are independent of $h$, $k$, and $x$, but may
depend on $T$ and $r$. The order of root-mean-square convergence is therefore ${1}/{2}$. 
\end{theorem}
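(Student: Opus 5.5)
We follow the fundamental mean-square theorem in the form of \cite{Tret:13} (Tretyakov--Zhang), which is designed for non-globally Lipschitz drift. That theorem converts one-step estimates into a global one: local mean error of order $h^{p_1}$ and local $L^{2r}$ error of order $h^{p_2}$, with $p_2\ge 1/2$ and $p_1\ge p_2+1/2$, give global order $p_2-1/2$. \cref{prop:one_step_s} supplies exactly $p_1=2$ and $p_2=1$, so the target order is $1/2$. What remains is to verify the structural hypotheses of that theorem for the exact flow, and the polynomial moment bounds it requires.

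The argument runs as follows. Write $X_{t_k,\widetilde X_k}(t_{k+1})$ for the exact solution started at the numerical point over one step, with $t_k=kh$. Decompose
\[
\varepsilon_{k+1}:=X_{t_{k+1}}-\widetilde X_{k+1}=\bigl(X_{t_k,X_{t_k}}(t_{k+1})-X_{t_k,\widetilde X_k}(t_{k+1})\bigr)+\bigl(X_{t_k,\widetilde X_k}(t_{k+1})-\widetilde X_{k+1}\bigr)=:S_k+R_k .
\]
Conditionally on $\mathcal F_{t_k}$, $R_k$ is a one-step error from the point $\widetilde X_k$. By the Markov property and \cref{prop:one_step_s},
\[
|\mathbb E[R_k\mid\mathcal F_{t_k}]|\le C(1+|\widetilde X_k|^{2q})^{1/2}h^2,\qquad \bigl(\mathbb E[|R_k|^{2r}\mid\mathcal F_{t_k}]\bigr)^{1/2r}\le C(1+|\widetilde X_k|^{2rq})^{1/2r}h .
\]
For $S_k$ we use It\^o's formula on $|S|^{2r}$ together with the one-sided Lipschitz condition in Assumption~\ref{assump:drift} and the Lipschitz volatility in Assumption~\ref{assump:vol}. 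This gives $\mathbb E[|S_k|^{2r}\mid\mathcal F_{t_k}]\le |\varepsilon_k|^{2r}(1+Ch)$. It also gives the finer representation $S_k=\varepsilon_k+Z_k$ with $|\mathbb E[Z_k\mid\mathcal F_{t_k}]|\le Ch|\varepsilon_k|$ and $\mathbb E[|Z_k|^{2}\mid\mathcal F_{t_k}]\le C(1+|X_{t_k}|^{2q}+|\widetilde X_k|^{2q})|\varepsilon_k|^2h$; the latter uses the polynomial growth bound on $V_0$ differences. We then expand
\[
\mathbb E|\varepsilon_{k+1}|^{2r}\le \mathbb E|S_k|^{2r}+2r\,\mathbb E\bigl[|S_k|^{2r-2}S_k^\top\mathbb E[R_k\mid\mathcal F_{t_k}]\bigr]+\text{higher-order terms in }R_k .
\]
The cross term is handled with Young's inequality, the mean estimate, and the decomposition $S_k=\varepsilon_k+Z_k$. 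The remaining terms are bounded using the $L^{2r}$ estimate, Hölder's inequality, and Young's inequality. Together these yield
\[
\mathbb E|\varepsilon_{k+1}|^{2r}\le(1+Ch)\,\mathbb E|\varepsilon_k|^{2r}+C\bigl(1+\mathbb E|\widetilde X_k|^{M}+\mathbb E|X_{t_k}|^{M}\bigr)h^{r+1}
\]
for some power $M$. A discrete Gronwall argument over $k\le T/h$ then gives $\mathbb E|\varepsilon_k|^{2r}\le C(1+\sup_j\mathbb E|\widetilde X_j|^M+\sup_t\mathbb E|X_t|^M)h^r$, which is the claim after taking the $2r$-th root.

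\textbf{Main obstacle: uniform moment bounds.} The crux is showing $\sup_{kh\le T}\mathbb E_x|\widetilde X_k|^{m}\le C(1+|x|^{m'})$ uniformly in $h\in(0,1)$, since the drift is superlinear. Exact-solution moments follow from the one-sided Lipschitz condition in the standard way. For the scheme, the key point is that the increment in \eqref{eq:pathwise_ss} is $V_i(\widetilde X_n)$ times a random variable whose modulus is at most $|\Delta W_{i,n}|+|\Delta\widetilde W_{i,n}|$, regardless of how large $\tilde\alpha_i$ is. Hence jumps cannot blow up, because $V_i$ grows at most linearly by Assumption~\ref{assump:vol}.II.

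To prove the moment bound, expand $|\widetilde X_{n+1}|^{2m}$ and take the conditional expectation. The conditional mean of the increment is $\sum_i V_i\,\tilde\alpha_i(1+\tilde\alpha_i^2)^{-1/2}\sqrt{2h/\pi}$, and $\sqrt{2/\pi}=2\phi(0)$. Therefore the mean is $h\sum_iV_i\alpha_i(1+\tilde\alpha_i^2)^{-1/2}$. Since $\sum_iV_i\alpha_i=V_0$ by \eqref{def:alpha}, this is a tamed version of $hV_0$, with componentwise weights in $(0,1]$. The sign of $x^\top V_i\alpha_i$ need not be aligned, so we will control the inner product $x^\top(\text{mean increment})$ by bounding the increment's magnitude by $C(1+|x|)h^{1/2}$. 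This gives $\mathbb E|\widetilde X_{n+1}|^{2m}\le(1+Ch^{1/2})$-type growth, which is too weak. We will therefore first use the one-sided Lipschitz structure, namely $x^\top V_0(x)\le C(1+|x|^2)$. If needed, we fall back on a stopping or localisation argument in the spirit of \cite{Tret:13}, which only requires moment bounds on sets where $h|x|^{q}$ is small, with the complement having super-polynomially small probability.
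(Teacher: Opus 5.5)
The overall route matches the paper's. The paper proves the theorem by checking the three hypotheses of \cite[Theorem 2.1]{Tret:13}, taking the structural conditions from Assumptions~\ref{assump:drift}--\ref{assump:vol} and the local estimates from \cref{prop:one_step_s}. Your splitting $\varepsilon_{k+1}=S_k+R_k$ followed by discrete Gronwall simply re-derives that theorem.

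\textbf{The gap.} It lies exactly where you put it: the third hypothesis, a bound $\sup_{kh\le T}\mathbb{E}_x|\widetilde X_k|^{2p}\le C(1+|x|^{2p\beta})$ uniform in small $h$, is never proved. Without it, your Gronwall bound $C(1+\sup_j\mathbb{E}|\widetilde X_j|^M+\dots)h^r$ is not even known to be finite. Your direct estimate gives only a per-step factor $1+Ch^{1/2}$, which, as you note, explodes over $T/h$ steps. Neither fallback repairs this.
\begin{itemize}
\item The conditional mean of the increment is $h\sum_i w_iV_i\alpha_i$ with unequal weights $w_i=(1+\tilde\alpha_i^2)^{-1/2}$. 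So, as you observe yourself, $x^\top V_0(x)\le C(1+|x|^2)$ gives no control of $\sum_i w_i\,x^\top V_i(x)\alpha_i(x)$.
\item \cite[Theorem 2.1]{Tret:13} takes moment bounds of the scheme as a \emph{hypothesis}. It supplies no localisation mechanism, and the super-polynomial smallness of your exceptional set is asserted, not shown.
\end{itemize}

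\textbf{The missing idea.} No new Lyapunov argument is needed, because the moment hypothesis concerns only the law of $\widetilde X_k$. Given $\widetilde X_n=y$, the variables $\Delta W_{i,n}/\sqrt h$ and $\Delta\widetilde W_{i,n}/\sqrt h$ are independent standard Gaussians, independent of the past. By Henze's representation \cite{Hen:86}, each factor $(\Delta W_{i,n}+\tilde\alpha_i(h,y)|\Delta\widetilde W_{i,n}|)/\sqrt{1+\tilde\alpha_i(h,y)^2}$ is then $\sqrt h$ times a skew-normal variable with parameter $\tilde\alpha_i(h,y)$, independently over $i$. This is precisely the law of $\sqrt h\,b_{i,n}\nu_{i,n}$ in \eqref{eq:ss-update} under the Gaussian flipping probability \eqref{eq:p-construction:Gaussian}. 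Hence $(\widetilde X_n)_n$ and $(\bar X_n)_n$ are Markov chains with the same transition kernel and initial point, so they have the same law. The moment bounds for the distributional scheme in \cite[Lemma C.2]{skew:ima} therefore transfer verbatim, and this is how the paper discharges the third hypothesis.

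\textbf{Two smaller slips in your Tretyakov--Zhang step.}
\begin{itemize}
\item The first-order term in $R_k$ is $2r\,\mathbb{E}[|S_k|^{2r-2}S_k^\top R_k]$. Since $S_k$ is not $\mathcal{F}_{t_k}$-measurable, you may apply the conditional mean bound only after isolating the factor $|\varepsilon_k|^{2r-2}\varepsilon_k$.
\item The bound $|\mathbb{E}[Z_k\mid\mathcal{F}_{t_k}]|\le Ch|\varepsilon_k|$ cannot hold with a state-independent $C$ for superlinear $V_0$ (e.g.\ $V_0(x)=-x^3$). It is also unnecessary: Cauchy--Schwarz with your $L^2$ bound on $Z_k$ already handles the $Z_k^\top R_k$ contribution.
\end{itemize}
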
 
\begin{proof}[Proof of Theorem \ref{thm:main_strong}]
We check the sufficient conditions in \cite[Theorem 2.1]{Tret:13} on the fixed time horizon $[0,T]$. The first and second conditions therein hold respectively from our Assumptions \ref{assump:drift}-\ref{assump:vol} and \cref{prop:one_step_s}.  
Finally, the third condition (finite moments of the scheme) follows from \cite[Lemma C.2]{skew:ima} by noticing that the pathwise scheme $\widetilde{X}_n$ and $\bar{X}_n$ defined in \eqref{eq:ss-update} share the same distribution. 
\end{proof} 

\subsection{Multi-level extension}\label{sec:multi.level.def} 
The pathwise construction of the skew-symmetric scheme in \eqref{eq:pathwise_ss} naturally facilitates its use within the \emph{multilevel Monte Carlo} framework \cite{Giles:08, Hut:13}. We consider the quantity $\mathbb{E}_x [f (X_T)]$ for a fixed time $T >0$, a starting state $x \in \mathbb{R}^d$ and a test function $f: \mathbb{R}^d \to \mathbb{R}$ as the target and assess the computational complexity of the multi-level estimator based on the proposed scheme. We define the step sizes $h_\ell = T / 2^{\ell}, \, \ell = 0, 1, \ldots, L$ for an integer $L > 0$ and write 
$K_\ell = 2^{\ell}$. Denote by $\widetilde{X}_{K_\ell}^{[i]}$ the $i$th independent realisation of the scheme (\ref{eq:pathwise_ss}) at time $T$ with the step size $h_\ell$. Then, the hierarchical estimators $Y_\ell, \ell = 0, 1, \ldots, L$ are defined as  
\begin{align}
Y_0 = \tfrac{1}{N_0} \sum_{i = 1}^{N_0} f (\widetilde{X}_{K_0}^{[i]}), 
\quad  
Y_\ell = \tfrac{1}{N_\ell} \sum_{i = 1}^{N_\ell} \Bigl[ 
f (\widetilde{X}_{K_{\ell}}^{[i]})
- f (\widetilde{X}_{K_{\ell - 1}}^{[i]}) 
\Bigr], \quad  1 \le \ell \le L, 
\end{align}
where $N_\ell$ is the number of Monte Carlo paths assigned to each estimator $Y_\ell$. For each $\ell \geq 1$, the fine and coarse paths are coupled using the same underlying Brownian motions, with coarse increments defined by $\Delta W_{i,n}^{(\ell-1)}=\Delta W_{i,2n}^{(\ell)}+\Delta W_{i,2n+1}^{(\ell)}$ and $\Delta\widetilde W_{i,n}^{(\ell-1)}=\Delta\widetilde W_{i,2n}^{(\ell)}+\Delta\widetilde W_{i,2n+1}^{(\ell)}$, so that the coarse update uses $\bigl|\Delta\widetilde W_{i,n}^{(\ell-1)}\bigr|$.  Finally, the multilevel estimator is defined as $Y = \sum_{\ell=0}^L Y_{\ell}$. 

The following result quantifies the computational cost $C$ required to control the mean-square error of the multilevel estimator. Here $C$ denotes the total number of drift and diffusion evaluations required by the algorithm.

\begin{corollary}[Computational complexity of Multi-Level estimator] \label{cor:mlmc}
Fix $T >0$ and let $f \in C_P^\infty(\mathbb{R}^d)$.  
Let Assumptions \ref{assump:drift}, \ref{assump:vol} and \ref{assump:deriv_poly} hold. Then there exists a constant $c > 0$ such that for any $\varepsilon < e^{-1}$, there are values $L$ and $N_\ell$ satisfying 
\begin{align}
\mathbb{E}_x \bigl[ \bigl( Y - \mathbb{E}_x [f (X_T)] \bigr)^2 \bigr] < \varepsilon^2, 
\end{align}
with a computational complexity
\begin{align} \label{eq:cost_bd}
C \le c \, \varepsilon^{-2} ( \log  (1/\varepsilon) )^2.  
\end{align}
\end{corollary}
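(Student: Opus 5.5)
The plan is to apply the general MLMC complexity theorem of Giles \cite{Giles:08}, in the form that allows polynomially growing test functions, cf. \cite{Hut:13}. That theorem needs three inputs for the level estimators $Y_\ell$:
\begin{align*}
\bigl|\mathbb{E}_x[f(\widetilde X_{K_\ell})]-\mathbb{E}_x[f(X_T)]\bigr| &\le c_1 h_\ell^{\alpha},\\
\mathrm{Var}\bigl(f(\widetilde X_{K_\ell})-f(\widetilde X_{K_{\ell-1}})\bigr) &\le c_2 h_\ell^{\beta},\\
\mathrm{cost}(Y_\ell) &\le c_3 N_\ell h_\ell^{-\gamma}.
\end{align*}
We will use $\alpha\ge 1/2$, $\beta=1$ and $\gamma=1$. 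In the regime $\beta=\gamma$ the theorem gives cost $\mathcal{O}(\varepsilon^{-2}(\log 1/\varepsilon)^2)$, with $L=\lceil \log_2(c/\varepsilon)/\alpha\rceil$ and $N_\ell\propto \varepsilon^{-2}L\,h_\ell$. So the work is to verify the three conditions with constants depending only on $T$, $x$ and $f$.

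\textbf{Cost.} Each step of \eqref{eq:pathwise_ss} uses one evaluation of $V_0,\dots,V_{d_W}$ and a fixed number of Gaussian draws. A coupled fine/coarse sample therefore costs at most $3K_\ell=3T h_\ell^{-1}$, which gives $\gamma=1$.

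\textbf{Weak error.} Theorem~\ref{thm:main_strong} already gives $\alpha=1/2$, which suffices since the theorem only requires $\alpha\ge \tfrac12\min(\beta,\gamma)$; alternatively, the weak order one of \cite{skew:ima} can be used, since the law of $\widetilde X_n$ equals that of $\bar X_n$. For the $\alpha=1/2$ route, write by the mean value theorem
\[
|f(X_T)-f(\widetilde X_{K_\ell})|\le C\bigl(1+|X_T|^m+|\widetilde X_{K_\ell}|^m\bigr)|X_T-\widetilde X_{K_\ell}|,
\]
since $f\in C_P^{1,m}$. Cauchy--Schwarz then bounds the expectation by the product of $L^2$ moments of the polynomial weight and the $L^2$ error. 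The moments are uniform in $h\in(0,1)$: for the SDE this follows from the one-sided Lipschitz drift, and for the scheme from \cite[Lemma C.2]{skew:ima}. The $L^2$ error is $\mathcal{O}(h_\ell^{1/2})$ by Theorem~\ref{thm:main_strong} with $r=1$.

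\textbf{Variance.} This is the main step. Bound the variance by the second moment and insert $f(X_T)$:
\[
\mathbb{E}\bigl[(f(\widetilde X_{K_\ell})-f(\widetilde X_{K_{\ell-1}}))^2\bigr]\le 2\,\mathbb{E}\bigl[(f(\widetilde X_{K_\ell})-f(X_T))^2\bigr]+2\,\mathbb{E}\bigl[(f(X_T)-f(\widetilde X_{K_{\ell-1}}))^2\bigr].
\]
Apply the polynomial mean value bound again, and use Hölder with exponents $(r/(r-1),r)$ to separate the weight from the error. Theorem~\ref{thm:main_strong} with $r=2$ gives $L^4$ errors of order $h_\ell^{1/2}$ and $h_{\ell-1}^{1/2}=\sqrt2\,h_\ell^{1/2}$, so both terms are $\mathcal{O}(h_\ell)$ and $\beta=1$.

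The delicate point is that the coarse path must be a genuine realisation of the scheme \eqref{eq:pathwise_ss} with step $h_{\ell-1}$, driven by the same Brownian motion $W$ as the fine path. Only then can Theorem~\ref{thm:main_strong} be applied to both paths against one common $X_T$. The aggregated increments $\Delta W^{(\ell-1)}_{i,n}$ and $\Delta\widetilde W^{(\ell-1)}_{i,n}$ are exactly the Brownian increments over the coarse grid. The coarse scheme uses $|\Delta\widetilde W^{(\ell-1)}_{i,n}|$, which is a function of the same auxiliary Brownian motion $\widetilde W$. Hence both fine and coarse paths are the pathwise scheme for one pair $(W,\widetilde W)$, and the true solution $X$ is driven by $W$ alone, independent of $\widetilde W$. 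This is exactly the setting of Theorem~\ref{thm:main_strong}.

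Given $\alpha\ge1/2$, $\beta=\gamma=1$, and $\varepsilon<e^{-1}$ so that $\log(1/\varepsilon)\ge1$ absorbs the constants, Giles' theorem yields \eqref{eq:cost_bd}.
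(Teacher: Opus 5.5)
Your proposal is correct and follows essentially the same route as the paper: you verify the hypotheses of Giles' MLMC theorem with $\beta=1$ and cost proportional to $h_\ell^{-1}$. As in the paper, the variance bound comes from the polynomial Lipschitz property of $f$, the moment bounds, and the $L^4$ strong rate of Theorem~\ref{thm:main_strong} applied to fine and coarse paths through the common $X_T$.

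The only difference is the bias condition. You obtain it with $\alpha=1/2$ directly from Theorem~\ref{thm:main_strong}, which suffices because Giles' theorem needs only $\alpha\ge 1/2$. The paper instead invokes the weak order one of \cite[Theorem 3.2]{skew:ima}. Your explicit check that the aggregated coarse increments give a genuine realisation of \eqref{eq:pathwise_ss} with step $h_{\ell-1}$ is a point the paper uses only implicitly.
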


\begin{proof}[Proof of Corollary \ref{cor:mlmc}]
We verify the four sufficient conditions of \cite[Theorem 3.1]{Giles:08} with $\alpha = \beta = 1$.

Condition (i) requires order $\alpha$ weak convergence of the numerical scheme. By \cite[Theorem 3.2]{skew:ima}, this holds with $\alpha = 1$ under Assumptions \ref{assump:drift}, \ref{assump:vol} and \ref{assump:deriv_poly}. Condition (ii) is trivially satisfied as the estimators within each level are simple Monte Carlo and use the linearity of expectation.  

For condition (iii), since $Y_\ell$ is the mean of $N_\ell$ independent coupled pairs,
$$\mathrm{Var}_x[Y_\ell] \leq \frac{1}{N_\ell} \mathbb{E}_x \bigl[|f(\widetilde{X}_{K_\ell}) - f(\widetilde{X}_{K_{\ell-1}})|^2\bigr].$$
Since $f \in C_P^\infty(\mathbb{R}^d)$, it can be shown using the mean value theorem that $f$ satisfies the polynomial Lipschitz condition
\[
|f(x) - f(y)|^2 \leq \widetilde{C}(1+|x|^q + |y|^q)|x-y|^2
\] 
for some $\widetilde{C} < \infty$, $q > 0$ and all $x,y\in\mathbb{R}^d$.  Using this and the Cauchy--Schwartz inequality gives
\begin{align*}
& \mathbb{E}_x \bigl[|f(\widetilde{X}_{K_\ell}) - f(\widetilde{X}_{K_{\ell-1}})|^2\bigr] 
\leq 
\widetilde{C}~\mathbb{E}_x \Bigl[\bigl(1 + |\widetilde{X}_{K_\ell}|^q + |\widetilde{X}_{K_{\ell-1}}|^q\bigr)|\widetilde{X}_{K_\ell} - \widetilde{X}_{K_{\ell-1}}|^2\Bigr]
\\
&\qquad \leq \widetilde{C}\sqrt{\mathbb{E}_x\Bigl[\bigl(1 + |\widetilde{X}_{K_\ell}|^q + |\widetilde{X}_{K_{\ell-1}}|^q\bigr)^2\Bigr]} \cdot \sqrt{\mathbb{E}_x\bigl[|\widetilde{X}_{K_\ell} - \widetilde{X}_{K_{\ell-1}}|^4\bigr]}.
\end{align*}
The first factor is bounded uniformly in $\ell$ using the moment bounds in Theorem \ref{thm:main_strong}. For the second, the triangle inequality in $L^4$ and the $L^4$ strong convergence from Theorem \ref{thm:main_strong} give
\begin{align*}
\mathbb{E}_x\bigl[|\widetilde{X}_{K_\ell} - \widetilde{X}_{K_{\ell-1}}|^4\bigr]^{1/4} 
\leq 
\mathbb{E}_x\bigl[|\widetilde{X}_{K_\ell} - X_T|^4\bigr]^{1/4} + \mathbb{E}_x\bigl[|\widetilde{X}_{K_{\ell-1}} - X_T|^4\bigr]^{1/4} 
\leq 
c h_\ell^{1/2}
\end{align*}
for some $c > 0$, meaning $\mathbb{E}_x[|\widetilde{X}_{K_\ell} - \widetilde{X}_{K_{\ell-1}}|^4] \leq c^4 h_\ell^2$. Combining gives
$
\mathrm{Var}_x[Y_\ell] \leq c_2 h_\ell/N_\ell,
$
which establishes condition (iii) with $\beta = 1$.

Finally, for $\ell \geq 1$, each coupled sample requires simulating one fine path with $K_\ell = 2^\ell$ steps and one coarse path with $K_{\ell-1} = 2^{\ell-1}$ steps, giving a per-sample cost of $O(2^\ell) = O(h_\ell^{-1})$. Hence $C_\ell$, the computational complexity of $Y_\ell$, satisfies $C_\ell \leq c_3 N_\ell h_\ell^{-1}$, confirming condition (iv).
\end{proof}



\section{Comparative analysis}\label{sec:comparative.analysis}

\subsection{Directional stability in the large state regime} \label{sec:inward_drift}

The following comparison focuses on a strong inward-drift regime, where the states in several coordinates are far from the stable region and the drift points strongly inward relative to the local volatility. This regime is typical for stochastic Ginzburg--Landau equations and overdamped Langevin dynamics with superquadratic potentials. Throughout this subsection, we consider the $d$-dimensional stochastic differential equation driven by $d$-dimensional standard Brownian motion 
\begin{align} \label{eq:sde_case_study}
\d X_t = b (X_t) \d t + \Sigma (X_t) \d W_t, \quad X_0 = x \in \mathbb{R}^d, 
\end{align}
where $b : \mathbb{R}^d \to \mathbb{R}^d$ and diagonal $\Sigma (x) = \mathrm{diag} \bigl[ \sigma_1 (x), \ldots, \sigma_d (x) \bigr]$ with $\sigma_i : \mathbb{R}^d \to \mathbb{R}$. Equation (\ref{eq:sde_case_study}) then admits the component-wise expression
\begin{align} 
\d X_{i,t} = b_{i} (X_{t}) \d t + \sigma_i  (X_t) \d W_{i, t}, \quad i = 1, \ldots, d.   
\end{align}
We study the probability that the one-step skew-symmetric scheme (\ref{eq:pathwise_ss}) captures the wrong direction when the drift function $b_i$ is strongly directed inwards. In particular, we focus on the scenario where some coordinates of the current state $x$ are far from the origin, and therefore the drift $b_i$ is such that for small time steps the next state will move toward the origin with probability close to one. 
For a non-empty set $\Lambda \subseteq \{1, \ldots, d\}$, we define $\rho_ {\Lambda} (x) := \min_{i \in \Lambda} |x_i|$, and limits  $\rho_{\Lambda} (x) \to  \infty$ are understood with the remaining coordinates staying in a fixed bounded set. We then assume that there exists a non-empty set $\Lambda \subseteq \{1, \ldots, d\}$ such that 
\begin{align} 
\label{eq:drift_inward}
\displaystyle 
\lim_{\rho_{\Lambda} (x) \to \infty} \max_{i \in \Lambda} \, 
\mathrm{sign} (x_i) b_i (x) = - \infty.  
\end{align}  
We also assume throughout this subsection that $\sigma_i(x)\neq0$ for every $i\in\Lambda$ and every state $x$ under consideration.  For a fixed step size $h > 0$, a single step of the pathwise skew-symmetric scheme gives
\begin{align} \label{eq:ss_one_step}
\widetilde{X}_{i, 1}
= x_i +  \sigma_i (x) \sqrt{h} \left( \frac{\xi_i + \tilde{\alpha}_i (h, x) |\eta_i|}{\sqrt{1 + \tilde{\alpha}_i (h, x)^2}} 
\right)
\end{align}
for $i \in \{1,...,d\}$, where $\xi = (\xi_1, \ldots, \xi_d), \, \eta = (\eta_1, \ldots, \eta_d) \sim N(0, I_d)$ are independent and
\[
\tilde{\alpha}_i (h, x) = \sqrt{\frac{\pi}{2}} \frac{b_i (x)}{\sigma_i (x)} \sqrt{h}. 
\]
For any $\varepsilon \ge 0$ and $x \in \mathbb{R}^d$, we define the event under which the scheme captures wrong directions as
\begin{align*}
A_{i} (\varepsilon, x) := \Bigl\{  \mathrm{sign} (x_i) \bigl( \widetilde{X}_{i, 1} -  x_i \bigr) > \varepsilon \Bigr\}
= \begin{cases}  
\bigl\{ 
\widetilde{X}_{i, 1} >  x_i  + \varepsilon \bigr\},  & x_i > 0; \\[0.2cm] 
\bigl\{ 
\widetilde{X}_{i, 1} <  x_i  - \varepsilon   \bigr\} & x_i < 0,   
\end{cases} \quad i \in \Lambda.   
\end{align*}
We also consider the component-wise tamed Euler scheme, whose one-step transition is given by
\begin{align} \label{eq:te_one_step}
\hat{X}_{i, 1}
= x_i + \frac{h b_i (x)}{1 + h | b_i (x)|} + \sigma_i (x) \sqrt{h} \xi_i
\end{align} 
for $i \in \{1,...,d\}$, where $\xi \sim N(0, I_d)$, and its wrong direction event is written as 
\begin{align}
B_{i} (\varepsilon, x) := \Bigl\{ 
\mathrm{sign} (x_i) \bigl( \hat{X}_{i, 1} -  x_i \bigr) > \varepsilon \Bigr\}, \qquad \varepsilon \ge 0, \ x \in \mathbb{R}^d,  \ i \in \Lambda.
\end{align}    

%

The proposition below illustrates that the tamed Euler scheme will often fail to produce a path that travels in the correct direction, but that the skew-symmetric scheme does not suffer the same issue.

\begin{proposition} \label{prop:large_state}
Let $\varepsilon \ge 0$, $d \in \mathbb{N}$. Also, let $h > 0$ be a fixed step size. Consider the SDE \eqref{eq:sde_case_study} with its drift satisfying \eqref{eq:drift_inward} for some non-empty  $\Lambda \subseteq \{1, \ldots, d\}$. Then, we have the following.
\begin{enumerate}[leftmargin=0.5cm]
\item[(i)] Tamed Euler scheme (\ref{eq:te_one_step}). It holds for $i \in \Lambda$ that
\begin{align}
\mathbb{P} \Bigl( B_i (\varepsilon, x)  \Bigr)  = 1 - \Phi \left( \frac{1}{\sqrt{h} |\sigma_i (x)|} \left( \varepsilon - \frac{h b_i(x)}{1 + h | b_i (x)|} \mathrm{sign} (x_i) \right) \right),
\end{align}
where $z \mapsto \Phi (z)$ denotes the standard Gaussian CDF on $\mathbb{R}$. 
If furthermore it holds that $\lim_{\rho_{\Lambda}(x) \to \infty} |\sigma_i (x)| = \widetilde{\sigma}_i \in (0, \infty]$, then (\ref{eq:drift_inward}) implies
\begin{align} \label{eq:te_limit}
\lim_{\rho_\Lambda (x) \to \infty} \mathbb{P} \left( \bigcup_{i \in \Lambda} B_i (\varepsilon, x) \right) = 1 - \prod_{i \in \Lambda} \Phi \left( \tfrac{1}{\sqrt{h} \widetilde{\sigma}_i} ( \varepsilon + 1) \right). 
\end{align}
\item[(ii)] Skew-symmetric scheme  (\ref{eq:ss_one_step}). It holds for $i \in \Lambda$ that
\begin{align}
\begin{aligned} 
& \mathbb{P} \bigl( A_i (\varepsilon, x) \bigr) \nonumber \\
& \quad = 1 - \mathbb{E}_{\eta_i \sim N(0,1)} 
\left[ \Phi \left(\frac{\sqrt{1 + \tilde{\alpha}_i (h, x)^2}}{\sqrt{h} |\sigma_i (x)|} \varepsilon - \sqrt{\frac{\pi}{2}} \frac{b_i (x)}{|\sigma_i (x)|} \sqrt{h} | \eta_i | \mathrm{sign} (x_i) \right) \right].  
\end{aligned} 
\end{align}
Furthermore, if it holds $\lim_{\rho_\Lambda (x) \to \infty} \mathrm{sign} (x_i) b_i (x) / | \sigma_i (x) | = - \infty$ for all $i \in \Lambda$, then 
\begin{align} \label{eq:ss_limit}
\lim_{\rho_{\Lambda} (x) \to \infty} \mathbb{P} \left( \bigcup_{i \in \Lambda} A_i (\varepsilon, x) \right) = 0.  
\end{align}
\end{enumerate}
\end{proposition}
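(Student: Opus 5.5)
Both parts rest on the same two observations: each one-step update is explicit, and the coordinates are conditionally independent. Because $\Sigma$ is diagonal and $(\xi,\eta)\sim N(0,I_d)\otimes N(0,I_d)$, the increments $\widetilde X_{i,1}-x_i$ (respectively $\hat X_{i,1}-x_i$) for different $i$ are independent. Hence
\[
\mathbb P\Bigl(\textstyle\bigcup_{i\in\Lambda}B_i\Bigr)=1-\textstyle\prod_{i\in\Lambda}\bigl(1-\mathbb P(B_i)\bigr),
\]
and the same identity holds for the events $A_i$. It therefore suffices to compute each single-coordinate probability and then pass to the limit factor by factor, which is legitimate since $\Lambda$ is finite.

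\emph{Part (i).} Write $s=\mathrm{sign}(x_i)$. Then
\[
s(\hat X_{i,1}-x_i)=s\,\frac{hb_i}{1+h|b_i|}+s\,\sigma_i\sqrt h\,\xi_i .
\]
Since $s\,\mathrm{sign}(\sigma_i)\xi_i\sim N(0,1)$, the event $B_i$ reduces to $N(0,1)>\bigl(\varepsilon-s\,hb_i/(1+h|b_i|)\bigr)/(\sqrt h|\sigma_i|)$, which gives the stated formula. For the limit, \eqref{eq:drift_inward} gives $s\,b_i\to-\infty$, so $|b_i|\to\infty$ with $s\,b_i=-|b_i|$ eventually. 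Thus $s\,hb_i/(1+h|b_i|)\to-1$, the argument of $\Phi$ tends to $(\varepsilon+1)/(\sqrt h\,\widetilde\sigma_i)$ (interpreted as $0$ when $\widetilde\sigma_i=\infty$), and continuity of $\Phi$ together with the product formula yields \eqref{eq:te_limit}.

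\emph{Part (ii).} Put $\beta_i=\sqrt{1+\tilde\alpha_i^2}$. Condition on $\eta_i$ and use that $s\,\mathrm{sign}(\sigma_i)\xi_i\sim N(0,1)$, independent of $\eta_i$. The event $A_i$ becomes
\[
Z>\frac{\beta_i\varepsilon}{\sqrt h|\sigma_i|}-s\,\mathrm{sign}(\sigma_i)\,\tilde\alpha_i|\eta_i| .
\]
Substituting $\tilde\alpha_i=\sqrt{\pi/2}\,(b_i/\sigma_i)\sqrt h$ gives $\mathrm{sign}(\sigma_i)\tilde\alpha_i=\sqrt{\pi/2}\,b_i\sqrt h/|\sigma_i|$, and taking the expectation over $\eta_i$ produces the displayed formula.

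For the limit, set $\kappa_i(x)=s\,b_i/|\sigma_i|\to-\infty$. The argument of $\Phi$ is at least $-\sqrt{\pi/2}\,\kappa_i\sqrt h\,|\eta_i|$, because the $\varepsilon$-term is nonnegative. This lower bound tends to $+\infty$ for every $\eta_i\neq0$, i.e. almost surely. Since $\Phi\le1$, dominated convergence gives $\mathbb E[\Phi(\cdot)]\to1$, so $\mathbb P(A_i)\to0$. Then \eqref{eq:ss_limit} follows from the product formula, or simply from the union bound.

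The only delicate points are bookkeeping. One is tracking the signs of $\sigma_i$ and $x_i$, handled by the symmetry of $\xi_i$. The other is noticing that the $\varepsilon$-term in (ii) only helps, so that its growth through $\beta_i$ causes no difficulty. I expect no real obstacle beyond this.
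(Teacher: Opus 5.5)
Your proposal is correct and follows the paper's proof: coordinatewise independence gives the product formula, each single-coordinate probability is an explicit Gaussian computation, (i) follows by continuity of $\Phi$, and (ii) follows by dominated convergence. You also fill in two points the paper leaves implicit: tracking the sign of $\sigma_i$ through the symmetry of $\xi_i$, and observing that the $\varepsilon$-term is nonnegative, so the growth of $\sqrt{1+\tilde{\alpha}_i(h,x)^2}$ is harmless.
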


The proof is postponed to Appendix \ref{sec:pf_large_state}. We illustrate the key implications from the above result through the following two model examples in applications.

\begin{example} \label{eg:ginzburg}
Consider the one-dimensional Stochastic Ginzburg-Landau model defined as
\begin{align} \label{eq:SGL}
\d X_t = (- X_t^3 + \beta X_t) \d t + \gamma X_t \d W_t, 
\end{align}
with parameters $\beta, \gamma > 0$. We see that the drift exhibits a strong inward property satisfying $\mathrm{sign} (x) b (x) \to - \infty$ where $b(x) := -x^3 + \beta x$. The limit conditions stated in Proposition \ref{prop:large_state} also hold, in particular $\mathrm{sign}(x) b (x)/ |\sigma (x)| \to - \infty$ and $\lim_{x \to \pm \infty} | \sigma (x) | \to \infty$.  

The probability that the next state of the skew-symmetric scheme captures the correct direction, as equation \eqref{eq:ss_limit} holds, converges to $1$ as the current state $x \to \pm \infty$. 

In contrast, the tamed Euler scheme moves in the wrong direction with a limiting probability of $1/2$ as \eqref{eq:te_limit} implies for any fixed $\varepsilon \ge 0$ and step size $h > 0$,  
\begin{align*}
\lim_{x \to \pm \infty} \mathbb{P} \Bigl( B_1  (\varepsilon, x) \Bigr)
= 1 - \Phi \bigl( 0 \bigr) = \tfrac{1}{2}.
\end{align*}
This agrees with the definition of the scheme (\ref{eq:te_one_step}), that is, due to the large drift and volatility, the noise term $\sigma_i (x) \sqrt{h} \xi_i$ dominates the tamed drift, which becomes approximately $- \mathrm{sign}(x)\times1$ when $|x|$ is large.  Consequently, almost half of the trajectories of the one-step tamed Euler scheme fail to capture the correct direction, as demonstrated in \cref{fig:intro_SGL} of Section \ref{sec:intro}. 
\end{example}
\begin{example}
We now consider the $d$-dimensional \emph{overdamped Langevin equation}
\begin{align*}
\d X_t = - \nabla U (X_t) \d t + \sqrt{2} \d W_t,  
\end{align*}
with the potential $U$ of the form $U(x) = \sum_{i = 1}^d U_i (x_i), \, x \in \mathbb{R}^d$ with $U_i (z) = z^4/4 - \beta z^2/2, \, z \in \mathbb{R}$ for a parameter $\beta > 0$. We observe that the negative gradient of the potential exhibits a strong inward drift in all coordinates, meaning (\ref{eq:drift_inward}) holds with $\Lambda = \{1, \ldots, d\}$. 

Similar to the previous example, the second statement in \cref{prop:large_state} suggests that the skew-symmetric scheme moves in the wrong direction with a probability that converges to zero when the current state $x \in \mathbb{R}^d$ is large. 

On the other hand, for the tamed Euler scheme, we have
\begin{align*}
\lim_{\rho_\Lambda (x) \to \infty} \mathbb{P} \left( \bigcup_{i = 1}^d B_i  (\varepsilon, x) \right) = 1 - \left\{ \Phi \left( \frac{1}{\sqrt{2 h}} (\varepsilon + 1) \right) \right\}^d \to 1, \quad d \to \infty, 
\end{align*} 
so the probability that at least one of the coordinates of the tamed Euler scheme moves in the wrong direction is close to one for large state $x$ and dimension $d$. 
\end{example}

\subsection{Dynamics comparisons for stochastic Ginzburg-Landau} 

We further compare the two schemes against the true dynamics of the one-dimensional Stochastic Ginzburg-Landau model \eqref{eq:SGL} to assess how well they capture the exact dynamics. 

Let $h > 0$ be a fixed step size, $\varepsilon \ge 0$ and $\Delta_h^x = Y_h^x - x$, where $Y_h^x$ denotes the exact solution at time $h$ starting from $x$ or its corresponding numerical approximation. We then compare those dynamics via the following three criteria:
\begin{enumerate}
\item Wrong-direction probability, i.e., $\lim_{x \to \infty} \mathbb{P} \bigl( \Delta_h^x  > \varepsilon \bigr)$; 
\item Normalised mean increment, i.e., $\lim_{x \to \infty} \mathbb{E}_x [\Delta_h^x] / x$;
\item Normalised $L^2$ increment, i.e., $\lim_{x \to \infty} \sqrt{\mathbb{E}_x [(\Delta_h^x)^2]}/x$.
\end{enumerate}
Table \ref{tab:one_step_signed_mean} summarises the three asymptotic quantities for each dynamics. The limits are obtained by elementary calculus. For the true dynamics, we used the closed-form expression of the solution \[
X_h = \exp \bigl( (\beta - \gamma^2/2) h + \gamma B_h \bigr) / \sqrt{x^{-2} + 2 \int_0^h \exp \big( (2 \beta - \gamma^2) s + 2 \gamma B_s \bigr) ds }
\]
which can be obtained from the It\^o's formula for $Z_t = X_t^{-2}$. The table reveals that the skew-symmetric scheme reproduces the qualitative behaviour of the exact dynamics under large-state scaling. In contrast, the tamed Euler scheme exhibits the random-walk behaviour as discussed before, and its normalised mean increment vanishes in the large-state regime. 

\begin{table}[h]
\centering
\caption{Comparison of exact dynamics and its one-step numerical approximations in the large-state regime \(x\to+\infty\) for the one-dimensional Stochastic Ginzburg-Landau model \eqref{eq:SGL} with a fixed step size $h > 0$ and $\varepsilon \ge  0$. } 
\label{tab:one_step_signed_mean}
\renewcommand{\arraystretch}{1.25}
\setlength{\tabcolsep}{12pt}
\begin{tabular}{@{}lccc@{}}
\toprule
Dynamics
& \makecell{\(\displaystyle \lim_{x \to \infty} \mathbb{P} \bigl( \Delta_h^x > \varepsilon \bigr)\)} 
& \makecell{\(\displaystyle \lim_{x \to \infty} \frac{1}{x} {\mathbb{E} \bigl[ \Delta_h^x  \bigr]}\)}
& \makecell{\(\displaystyle \lim_{x \to \infty} \frac{1}{x} \sqrt{\mathbb{E} \bigl[ (\Delta_h^x)^2  \bigr]} \)} \\[0.2cm] 
\midrule
Exact solution
&
\(\displaystyle 0 \)
&
\(\displaystyle -1 \)
&
\(\displaystyle 1 \)
\\[0.2cm]
\makecell[l]{Tamed Euler \\ scheme}
&
\(\displaystyle \frac{1}{2} \)
&
\(\displaystyle 0 \)
&
\(\displaystyle \gamma \sqrt{h} \)
\\[0.2cm]
\makecell[l]{Skew-symmetric\\ scheme}
&
\(\displaystyle 0 
\)
&
\(\displaystyle 
- \gamma \sqrt h \sqrt{\frac{2}{\pi}} 
\)
&
\(\displaystyle \gamma \sqrt{h} \)
\\
\bottomrule
\end{tabular}
\end{table}

\subsection{Variance analysis under the multi-level setting} \label{sec:var.multilevel}

Consider again the one-dimensional stochastic Ginzburg--Landau model defined in \eqref{eq:SGL}.
We fix a step size $h>0$ and study the multilevel Monte Carlo (MLMC) coupling between one coarse step of size $h$ and two fine steps of size $h/2$, in the regime $x\to\infty$.  Let $(U_1,U_2,V_1,V_2,G_1,G_2)$ be independent standard Gaussian random variables, and set
\[
U_c := \frac{U_1+U_2}{\sqrt{2}},
\qquad
V_c := \frac{V_1+V_2}{\sqrt{2}},
\qquad
G_c := \frac{G_1+G_2}{\sqrt{2}}.
\]
We also set $a(h) := \gamma \sqrt{h/2}$, and when convenient write $b(x) = -x^3 + \beta x$. We study the following numerical schemes:

\paragraph{(i) Tamed Euler}
For $\eta>0$, set
\begin{equation}
\label{eq:tamed}
T_\eta(y;G)
:=
y + \frac{\eta\,b(y)}{1+\eta |b(y)|} + \gamma y \sqrt{\eta}\,G.
\end{equation}
Define the coupled coarse and fine updates by
\[
Y_h^c(x) := T_h(x;G_c),
\qquad
Y_{h/2}^f(x) := T_{h/2}\bigl(T_{h/2}(x;G_1);G_2\bigr).
\]

\paragraph{(ii) Skew-symmetric scheme}
For $\eta>0$, define
\[
\tilde{\alpha}(\eta, y) := \sqrt{\frac{\pi}{2}} \sqrt{\eta}\,\frac{b(y)}{\gamma y},
\qquad
\beta_\eta(y) := \frac{1}{\sqrt{1+\tilde{\alpha}(\eta, y)^2}},
\qquad
\delta_\eta(y) := \frac{\tilde{\alpha}(\eta,y)}{\sqrt{1+\tilde{\alpha}(\eta, y)^2}},
\]
and set
\begin{equation}
\label{eq:skew}
S_\eta(y;U,V)
:=
y + \gamma y \sqrt{\eta}\bigl(\beta_\eta(y)V + \delta_\eta(y)|U|\bigr).
\end{equation}
Define the coupled coarse and fine updates in this case by
\[
\tilde{X}_h^c(x) := S_h(x;U_c,V_c),
\qquad
\tilde{X}_{h/2}^f(x) := S_{h/2}\bigl(S_{h/2}(x;U_1,V_1);U_2,V_2\bigr).
\]
\begin{figure}
    \centering
    \includegraphics[width=0.8\linewidth]{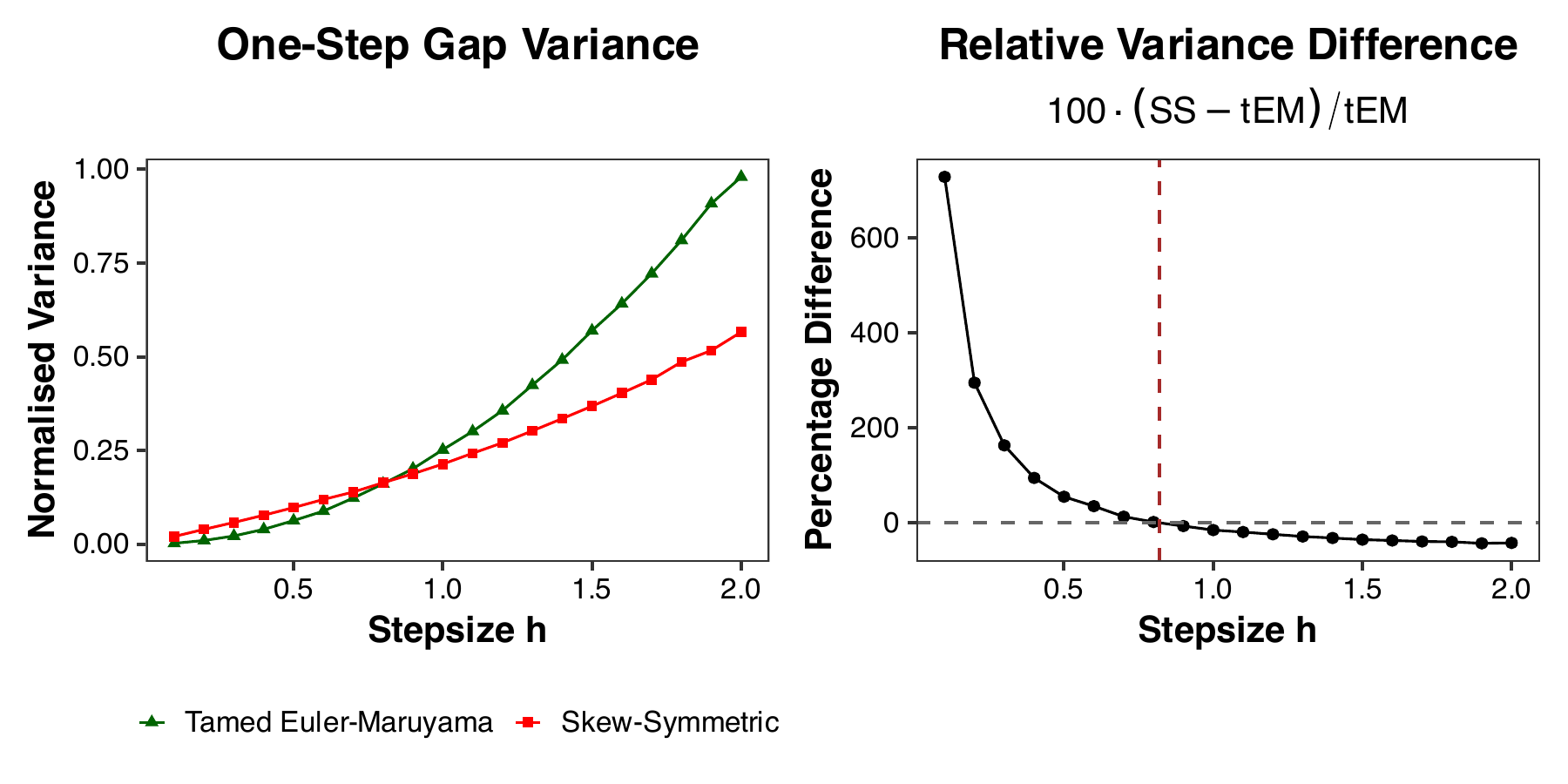}
    \caption{Normalised one-step MLMC gap variances for the skew-symmetric (red) and tamed Euler--Maruyama (green) schemes applied to the stochastic Ginzburg--Landau equation $\d X_t = (-X_t^3 + 10 X_t) \d t + X_t \d W_t$ with $X_0 = 100$. The right panel shows the relative variance difference \(100(\mathbb{V}_{\mathrm{sk}}-\mathbb{V}_{\mathrm{te}})/\mathbb{V}_{\mathrm{te}}\), illustrating the predicted crossover near \(h \approx 0.82\) of Corollary \ref{cor:comparison}.}
    \label{fig:combined_one_step_var_comparison}
\end{figure}
The main result of the section is the following.

\begin{theorem}[Large-state MLMC asymptotics]
\label{thm:main}
Fix $h>0$ and $\gamma>0$. Then, as $x \to \infty$, the following limits hold.

\begin{enumerate}[leftmargin=0.5cm]
\item[(i)] Tamed Euler.
\[
\frac{Y_{h/2}^f(x) - Y_h^c(x)}{x}
\rightarrow
a(h)^2 G_1 G_2
\quad \text{in } L^2,
\]
meaning
\[
\mathbb{V}_\mathrm{te}(h) := \lim_{x\to\infty}
\frac{1}{x^2}
\textup{Var} \bigl(Y_{h/2}^f(x) - Y_h^c(x)\bigr)
=
a(h)^4.
\]

\item[(ii)] Skew-symmetric scheme.
\[
\frac{\tilde{X}_{h/2}^f(x) - \tilde{X}_h^c(x)}{x}
\rightarrow
a(h)\Bigl(|U_1+U_2| - |U_1| - |U_2|\Bigr)
+ a(h)^2 |U_1||U_2|
\quad \text{in } L^2,
\]
meaning
\begin{align*}
& \mathbb{V}_{\mathrm{sk}}(h)
:=
\lim_{x\to\infty}
\frac{1}{x^2}\textup{Var}\bigl(\tilde{X}_{h/2}^f(x) - \tilde{X}_h^c(x)\bigr)
\\
&\quad=
\left( 2 - \frac{16}{\pi} + \frac{8\sqrt{2}}{\pi} \right) a(h)^2 
+ 
2 \left(\frac{-4 - \pi + 4\sqrt{2}}{\pi^{3/2}}\right) a(h)^3 
+ 
\left( 1 - \frac{4}{\pi^2} \right) a(h)^4.
\end{align*}
\end{enumerate}
\end{theorem}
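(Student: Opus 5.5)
The plan is to prove both parts by the same two-step route. First, show that the normalised level difference converges to the stated limit almost surely, as $x\to\infty$ with the Gaussians held fixed. Second, upgrade this to $L^2$ convergence with an explicit integrable dominating bound. Convergence in $L^2$ gives convergence of the variance, so $\mathbb{V}_{\mathrm{te}}$ and $\mathbb{V}_{\mathrm{sk}}$ are just the variances of the limits, and these are computed from Gaussian moment identities.

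\emph{Tamed Euler.} The tamed drift satisfies $|\eta b(y)/(1+\eta|b(y)|)|\le 1$ for every $y$, whatever its sign. Hence
\[
T_{h/2}(x;G_1) = x\bigl(1+a(h)G_1\bigr)+R_1, \qquad |R_1|\le 1 .
\]
Applying $T_{h/2}$ again with $G_2$ and dividing by $x$ gives
\[
\frac{Y^f_{h/2}(x)}{x} = \bigl(1+aG_1\bigr)\bigl(1+aG_2\bigr) + O\!\bigl(x^{-1}(1+|G_2|)\bigr).
\]
The $O(\cdot)$ term is a deterministic constant times $x^{-1}(1+|G_2|)$. Since $\sqrt h\,G_c=\sqrt{h/2}\,(G_1+G_2)$, the coarse step gives $Y^c_h(x)/x = 1+a(G_1+G_2)+O(x^{-1})$. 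Subtracting, the difference is $a^2G_1G_2$ plus an error whose $L^2$ norm is $O(1/x)$. This proves (i) directly, with $\operatorname{Var}(a^2G_1G_2)=a^4$.

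\emph{Skew-symmetric scheme.} The key observation is that $b(y)/(\gamma y)=(\beta-y^2)/\gamma$ is even in $y$ and tends to $-\infty$ as $|y|\to\infty$. Therefore, as $|y|\to\infty$ in either direction,
\[
\delta_\eta(y)\to -1, \qquad \gamma y\,\beta_\eta(y)=O(1/|y|).
\]
Consequently $S_\eta(y;U,V)=y\bigl(1-\gamma\sqrt\eta\,|U|\bigr)+r_\eta(y)$, where $r_\eta(y)/|y|\to0$; more precisely $r_\eta(y)$ is bounded by $C\bigl(|y|^{-1}(|V|+|U|)+\ldots\bigr)$ for $|y|\ge1$. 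For bounded $y$, a crude polynomial bound suffices.
\begin{itemize}
\item First fine step: $y_1:=S_{h/2}(x;U_1,V_1)$ satisfies $y_1/x\to 1-a|U_1|$.
\item Second fine step: on the event $\{a|U_1|\neq1\}$, which has full probability, $|y_1|\to\infty$. Applying the same expansion gives $\tilde X^f/x\to(1-a|U_1|)(1-a|U_2|)$.
\item Coarse step: $\tilde X^c/x\to1-\gamma\sqrt h\,|U_c|=1-a|U_1+U_2|$.
\end{itemize}
Expanding the product yields exactly the limit stated in (ii).

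For $L^2$ convergence I will use the pathwise bound $|S_\eta(y;U,V)|\le |y|\bigl(1+\gamma\sqrt\eta(|U|+|V|)\bigr)$, which holds because $\beta_\eta,|\delta_\eta|\le1$. It gives
\[
\bigl|\tilde X^f-\tilde X^c\bigr|/x \le \text{a fixed polynomial in } |U_i|,|V_i|,
\]
which is square-integrable and independent of $x$. Dominated convergence then yields $L^2$ convergence. The only delicate point is the second fine step when $y_1$ is not large, namely near $a|U_1|=1$. The domination handles this without needing a rate, since pointwise convergence only requires $a|U_1|\ne1$. I expect this to be the main technical check, and it must be done carefully for negative $y_1$. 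It reduces to verifying that $\delta_\eta$ does not depend on $\operatorname{sign}(y)$, which is the evenness noted above.

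\emph{Variance.} Write $D=|U_1+U_2|-|U_1|-|U_2|$ and $P=|U_1||U_2|$, so that
\[
\mathbb{V}_{\mathrm{sk}}=a^2\operatorname{Var}D+2a^3\operatorname{Cov}(D,P)+a^4\operatorname{Var}P .
\]
The ingredients are as follows.
\begin{itemize}
\item $\operatorname{Var}P=1-4/\pi^2$ follows immediately from $\mathbb{E}|U|=\sqrt{2/\pi}$ and $\mathbb{E}U^2=1$.
\item $\operatorname{Var}D$ needs $\mathbb{E}|U_1+U_2|=2/\sqrt\pi$ and $\mathbb{E}(U_1+U_2)^2=2$. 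It also needs the cross moment $\mathbb{E}|U_1+U_2||U_1|$. For this I will use the classical identity $\mathbb{E}|X||Y|=\tfrac2\pi\bigl(\sqrt{1-\rho^2}+\rho\arcsin\rho\bigr)$ for standard Gaussians with correlation $\rho$, applied with $\rho=1/\sqrt2$ after rescaling by $\sqrt2$.
\item $\operatorname{Cov}(D,P)$ needs the triple moment $\mathbb{E}|U_1+U_2||U_1||U_2|$. I will compute it in polar coordinates: the radial integral $\int r^4e^{-r^2/2}\,dr$ factors out, leaving an elementary angular integral of $|\cos\theta+\sin\theta||\cos\theta\sin\theta|$. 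Sign-quadrant symmetry reduces this to an integral over $[0,\pi/2]$ and one mixed-sign quadrant.
\end{itemize}
Assembling these moments should produce the stated coefficients, including the $\sqrt2$ terms. This routine but error-prone bookkeeping, especially the triple moment, is where I would double-check numerically against Figure~\ref{fig:combined_one_step_var_comparison}.
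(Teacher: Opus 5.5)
Your proposal is correct and follows essentially the same route as the paper. For tamed Euler you use the bound $|d_\eta|\le 1$. For the skew-symmetric scheme you use the limits $\beta_\eta(y)\to 0$ and $\delta_\eta(y)\to -1$ as $|y|\to\infty$, applied at the intermediate state where $1-a(h)|U_1|\neq 0$ almost surely. You then pass to $L^2$ by dominated convergence.

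The only difference is in that last step. You use pointwise convergence with a single polynomial-in-Gaussians dominating bound, whereas the paper uses convergence in probability, $L^4$ limits of the factors and H\"older.

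Your moment plan also correctly fills in the computation the paper leaves as ``direct calculation''. The values $\mathbb{E}\bigl[|U_1+U_2||U_1|\bigr]=2/\pi+1/2$ and $\mathbb{E}\bigl[|U_1+U_2||U_1||U_2|\bigr]=2\sqrt{2/\pi}-1/\sqrt{\pi}$ reproduce exactly the stated variance and covariance constants.
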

The proof of \cref{thm:main} is provided in Appendix \ref{sec:pf_var_cpl}. As a consequence of \cref{thm:main}, we have the following comparison of the variances under the tamed Euler and skew-symmetric schemes, which we also illustrate in \cref{fig:combined_one_step_var_comparison}.
\begin{corollary} 
\label{cor:comparison}
It holds that
\[
\mathbb{V}_{\mathrm{sk}}(h) < \mathbb{V}_{\mathrm{te}}(h)
\quad \text{for } h > h_\star,
\]
and $\mathbb{V}_{\mathrm{sk}}(h) > \mathbb{V}_{\mathrm{te}}(h)$ for $0< h < h_\star$, where 
\begin{align*}
h_\star 
&= \frac{2a_\star^2}{\gamma^2} \approx \frac{0.82}{\gamma^2}, 
\\
a_\star
&:=
\frac{\pi^2}{4}
\Biggl(
\frac{-4-\pi+4\sqrt{2}}{\pi^{3/2}} + \sqrt{\left(\frac{-4-\pi+4\sqrt{2}}{\pi^{3/2}}\right)^2 + \frac{4 \bigl(2 - 16/\pi + 8\sqrt{2}/\pi \bigr)}{\pi^2}}
\Biggr) \approx 0.64.
\end{align*}
\end{corollary}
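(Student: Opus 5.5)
The plan is to compare the two limiting variances from Theorem \ref{thm:main} directly as polynomials in $a = a(h) = \gamma\sqrt{h/2} > 0$, which is strictly increasing in $h$. Set
\[
c_2 := 2 - \frac{16}{\pi} + \frac{8\sqrt{2}}{\pi}, \qquad c_3 := \frac{-4-\pi+4\sqrt{2}}{\pi^{3/2}} .
\]
Then
\[
\mathbb{V}_{\mathrm{sk}}(h) - \mathbb{V}_{\mathrm{te}}(h) = c_2 a^2 + 2c_3 a^3 - \frac{4}{\pi^2} a^4 = a^2\Bigl( c_2 + 2 c_3 a - \frac{4}{\pi^2} a^2 \Bigr).
\]
Since $a^2>0$, the sign of the difference is the sign of the quadratic $g(a) := -\frac{4}{\pi^2}a^2 + 2c_3 a + c_2$.

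First, I will pin down the signs of the coefficients numerically. We have $8\sqrt2 \approx 11.314$, so $c_2 \approx 2 - 5.093 + 3.601 \approx 0.508 > 0$. Also $-4-\pi+4\sqrt2 \approx -1.484$, so $c_3 \approx -0.267 < 0$.

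Next, $g$ is a downward parabola with $g(0) = c_2 > 0$. Hence it has exactly one positive root and one negative root (their product is $-c_2\pi^2/4 < 0$). Solving $\frac{4}{\pi^2}a^2 - 2c_3 a - c_2 = 0$ gives
\[
a = \frac{\pi^2}{4}\Bigl(c_3 \pm \sqrt{c_3^2 + 4c_2/\pi^2}\Bigr).
\]
The positive root is the one with the plus sign, which is exactly $a_\star$. Numerically, $c_3^2 \approx 0.0713$ and $4c_2/\pi^2 \approx 0.206$, so the square root is about $0.527$ and $a_\star \approx 2.467 \times 0.260 \approx 0.64$.

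Therefore $g > 0$ on $(0, a_\star)$ and $g < 0$ for $a > a_\star$. Translating back through the monotone map $h \mapsto a(h)$ with $a(h_\star) = a_\star$, i.e. $h_\star = 2a_\star^2/\gamma^2 \approx 0.82/\gamma^2$, gives $\mathbb{V}_{\mathrm{sk}} > \mathbb{V}_{\mathrm{te}}$ for $h < h_\star$ and $\mathbb{V}_{\mathrm{sk}} < \mathbb{V}_{\mathrm{te}}$ for $h > h_\star$.

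The only delicate point is certifying the sign of $c_2$ rigorously rather than numerically, because it is a near-cancellation. Positivity is equivalent to $2\pi + 8\sqrt{2} > 16$, which follows from $\pi > 3.14$ and $\sqrt{2} > 1.414$. I would state these elementary bounds explicitly.
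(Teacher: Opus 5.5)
Your proposal is correct and follows the paper's proof essentially step for step. You factor out $a(h)^2$, observe that the remaining concave quadratic has positive constant term and hence a unique positive root $a_\star$, and transfer back to $h$ through the increasing map $h\mapsto\gamma\sqrt{h/2}$. Your explicit check $2\pi+8\sqrt{2}>16$ for $c_2>0$ slightly tightens what the paper merely asserts, and your product-of-roots argument shows that the sign of $c_3$ is not actually needed.
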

\begin{proof}
By Theorem~\ref{thm:main},
\[
\mathbb{V}_{\mathrm{sk}}(h)-\mathbb{V}_{\mathrm{te}}(h)
=
c_0 a(h)^2 + 2c_1 a(h)^3 - \frac{4}{\pi^2}a(h)^4
=
a(h)^2\Bigl(c_0 + 2c_1 a(h) - \frac{4}{\pi^2}a(h)^2\Bigr).
\]
Now $c_0>0$, $c_1<0$, and $4/\pi^2>0$. The quadratic polynomial
\[
q(a):=c_0 + 2c_1 a - \frac{4}{\pi^2}a^2
\]
is therefore strictly concave, satisfies $q(0)=c_0>0$, and obeys $q(a)\to -\infty$ as $a\to\infty$. It therefore has exactly one positive root, denoted by $a_\star:= a(h_\star)$. Solving $q(a_\star)=0$ gives
\[
a_\star
=
\frac{\pi^2}{4}
\left(
c_1 + \sqrt{c_1^2 + \frac{4c_0}{\pi^2}}
\right).
\]
Note that the sign of $\mathbb{V}_{\mathrm{sk}}(h)-\mathbb{V}_{\mathrm{te}}(h)$ is the sign of $q(a(h))$. Combining with the above implies that 
$\mathbb{V}_{\mathrm{sk}}(h)>\mathbb{V}_{\mathrm{te}}(h)$
for $0<a(h)<a_\star$,
and
$\mathbb{V}_{\mathrm{sk}}(h)<\mathbb{V}_{\mathrm{te}}(h)$
for $a(h)>a_\star$.
The final statement follows by substituting $a(h)=\gamma\sqrt{h/2}$ and the values of $c_0$ and $c_1$ given by Theorem~\ref{thm:main}.
\end{proof}

\section{Numerical experiments}
\label{sec:experiments}

In this section, we conduct experiments to investigate pathwise performance of the skew-symmetric scheme in both standard and multilevel Monte Carlo settings. 
The code used to produce the results can be found at \url{https://github.com/ShuSheng3927/skew_symmetric_pathwise}.

\subsection{Strong Convergence with the Simplified A\"it-Sahalia Model}\label{sec:numerical.strong.convergence.rate}

\begin{figure}[h!]
\centering
\includegraphics[width=\linewidth]{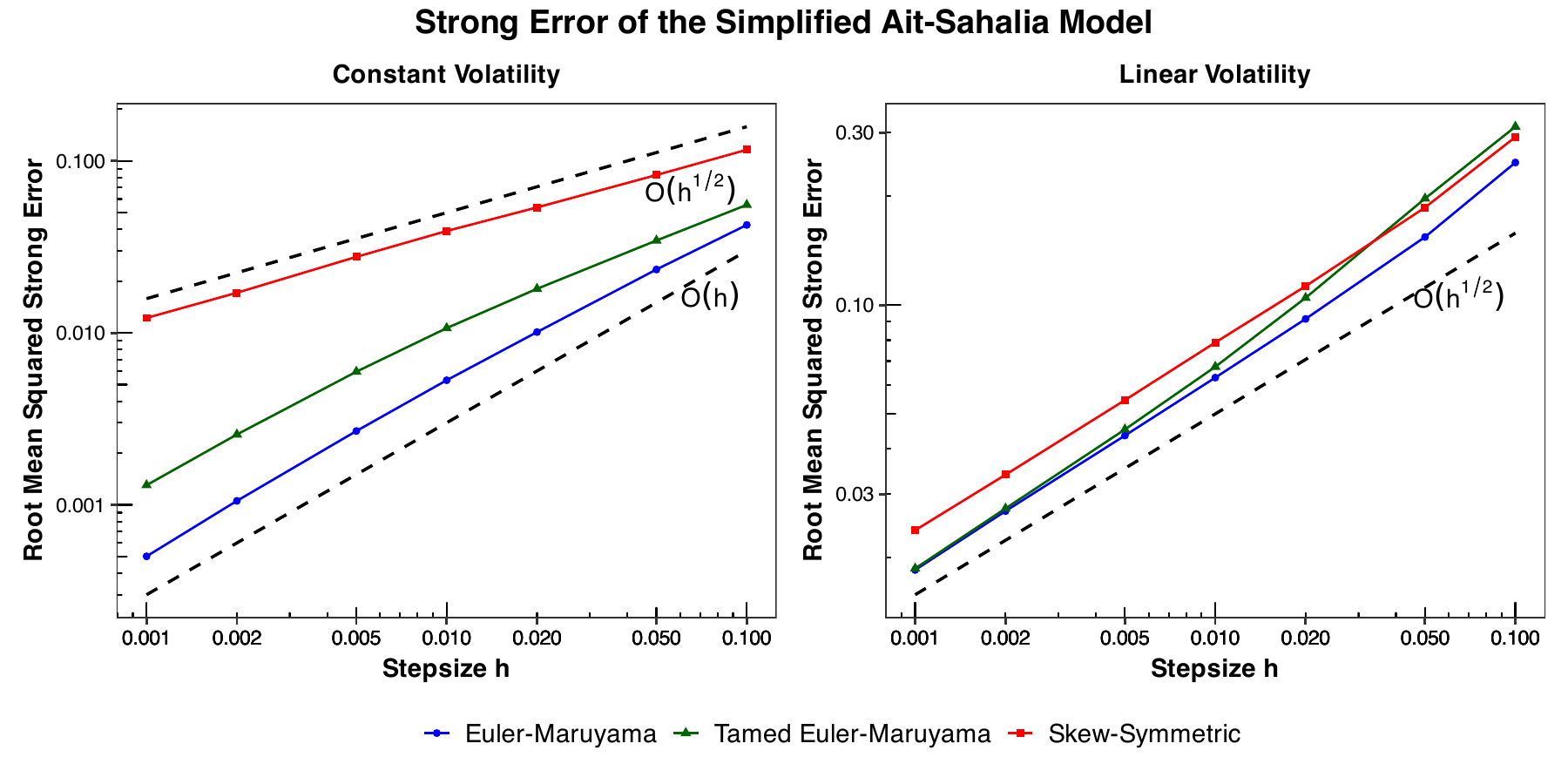}
\caption{Strong convergence of the Euler--Maruyama (blue), tamed Euler--Maruyama (green), and skew-symmetric (red) schemes for the simplified A\"it-Sahalia model with constant volatility (left) and linear volatility (right). The root-mean-square strong error is plotted against the step size $h$. The dashed black lines indicate the reference rates $O(h)$ and $O(h^{1/2})$.}
\label{fig:strong_convergence_simplified_as}
\end{figure}

To investigate the strong convergence behaviour of the considered numerical schemes, we study the simplified A\"it-Sahalia model \cite{szpruch2011numerical}
\[
\d X_t = \left( \delta + \gamma X_t - \epsilon X_t^2 \right) \d t + \beta X_t^b\, \d W_t,
\]
with initial condition $X_0=1$, terminal time $T=1$, and parameters $\delta=\gamma=\epsilon=0.5$ and $\beta=1$. We consider two choices of volatility exponent, namely $b=0$ (constant volatility) and $b=1$ (linear volatility).

Three numerical schemes are compared: Euler--Maruyama, tamed Euler, and the skew-symmetric scheme. For each method, we approximate the root-mean-square error (RMSE) at the terminal time using $M=10^5$ Monte Carlo samples and an Euler--Maruyama approximation with step size $10^{-4}$ as a reference solution. Denoting by $\widetilde{X}_h^{[m]}$ the terminal value computed using step size $h$ and by $X_{\mathrm{ref}}^{[m]}$ the corresponding reference solution, we compute
\[
\mathrm{RMSE}(h) = \left( \frac{1}{M} \sum_{m=1}^{M} \left| \widetilde{X}_h^{[m]} - X_{\mathrm{ref}}^{[m]} \right|^2 \right)^{1/2}.
\]
The approximations $\widetilde{X}_h^{[m]}$ and the reference solution $X_{\mathrm{ref}}^{[m]}$ are generated using the same underlying Brownian path for each Monte Carlo sample, corresponding to the standard synchronous coupling as described in Section \ref{sec:construction-pathwise-ss}.

The results are presented in Figure \ref{fig:strong_convergence_simplified_as}. For the linear volatility model ($b=1$), all three schemes exhibit an empirical $O(h^{1/2})$ strong convergence rate, in agreement with the rate predicted by the theory in settings where its assumptions hold. In the constant volatility setting ($b=0$), the Euler--Maruyama and tamed Euler schemes display first-order convergence, with RMSE scaling proportionally to $O(h)$. This improved rate is attributable to the additive-noise structure of the problem. The skew-symmetric scheme retains the $O(h^{1/2})$ behaviour predicted by Theorem \ref{thm:main_strong}.

\subsection{MLMC with the Stochastic Ginzburg--Landau Equation}\label{sec:numerical.mlmc}
We investigate the performance of multilevel Monte Carlo (MLMC) estimators for the stochastic Ginzburg--Landau equation \eqref{eq:SGL} with parameters $\beta= \gamma=1$ and the terminal time $T = 1$. We consider the multilevel estimator $Y = \sum_{\ell = L_0 }^L Y_\ell, \, L_0 < L,$ using the pathwise skew-symmetric (SS) \eqref{eq:ss_one_step} or the tamed Euler (TE) scheme \eqref{eq:te_one_step} with 
\begin{align*}
{Y}_{L_0}  = \frac{1}{N_{L_0}} \sum_{i = 1}^{N_{L_0}} 
f (\bar{X}_{K_{L_0}}^{[i]}), \quad 
Y_{\ell} =  \frac{1}{N_\ell} \sum_{i = 1}^{N_\ell} \Bigl[ f (\bar{X}_{K_\ell}^{[i]}) - f (\bar{X}_{K_{\ell - 1}}^{[i]})  \Bigr],  
\quad  \ell = L_0 + 1, \ldots, L, 
\end{align*} 
where $\bar{X}_{K_\ell}^{[i]}$ denotes the $i$-th sample path of the scheme TE or SS at time $T$ with the time-discretisation steps $K_\ell = 2^\ell$. The test function is set as $f(x) = x^2$. We refer to the estimator $Y$ computed by SS or TE as SS-MLMC or TE-MLMC, respectively, and compare the performance of the two MLMC estimators. 

The number of samples at each level $N_\ell$ and the maximum level $L$ are selected by following a standard procedure \cite[Section 3.1]{giles:15} and can be different for each numerical scheme following a pilot run.  In our experiment, we estimate such quantities by targeting the MLMC estimators' root-mean-square errors (RMSEs) $\varepsilon$, where $\varepsilon \in \{0.016, 0.008, 0.004, 0.002, 0.001 \}$. We also set for $\ell = L_0, L_0+1, \ldots, L$,  
\begin{align*}
N_\ell & = \left\lceil 
2 \varepsilon^{-2} \sqrt{{V_\ell}/{C_\ell}} \Biggl(\sum_{k = L_0}^L \sqrt{V_k C_k} \Biggr)   \right\rceil; \\[0.2cm] 
V_\ell & = 
\begin{cases}
    \mathrm{Var} \bigl[ f (\bar{X}^{[1]}_{K_{L_0}}) \bigr],  
    & \ell = L_0 ; \\[0.1cm] 
        \mathrm{Var} \bigl[ f (\bar{X}^{[1]}_{K_{\ell}}) - f (\bar{X}^{[1]}_{K_{\ell-1}})  \bigr], &  \ell = L_0 + 1, \ldots, L, 
\end{cases} 
\end{align*}
where $C_\ell$ is the cost of running the scheme per sample and is estimated as the median per-sample running time over five repetitions, each with 500,000 trajectories. We estimate $V_\ell$ using standard Monte Carlo (MC) with 500,000 trajectories. The finest level $L$ is then chosen so that the remaining bias, estimated from the correction means $| \mathbb{E} \bigl[ f (\bar{X}^{[1]}_{K_{\ell}}) - f (\bar{X}^{[1]}_{K_{\ell-1}}) \bigr] |$ together with the first-order weak convergence, is below $\varepsilon / \sqrt{2}$, following \cite[Section 3.1]{giles:15}.  

\begin{figure}
    \centering
    \includegraphics[width=1.0\linewidth]{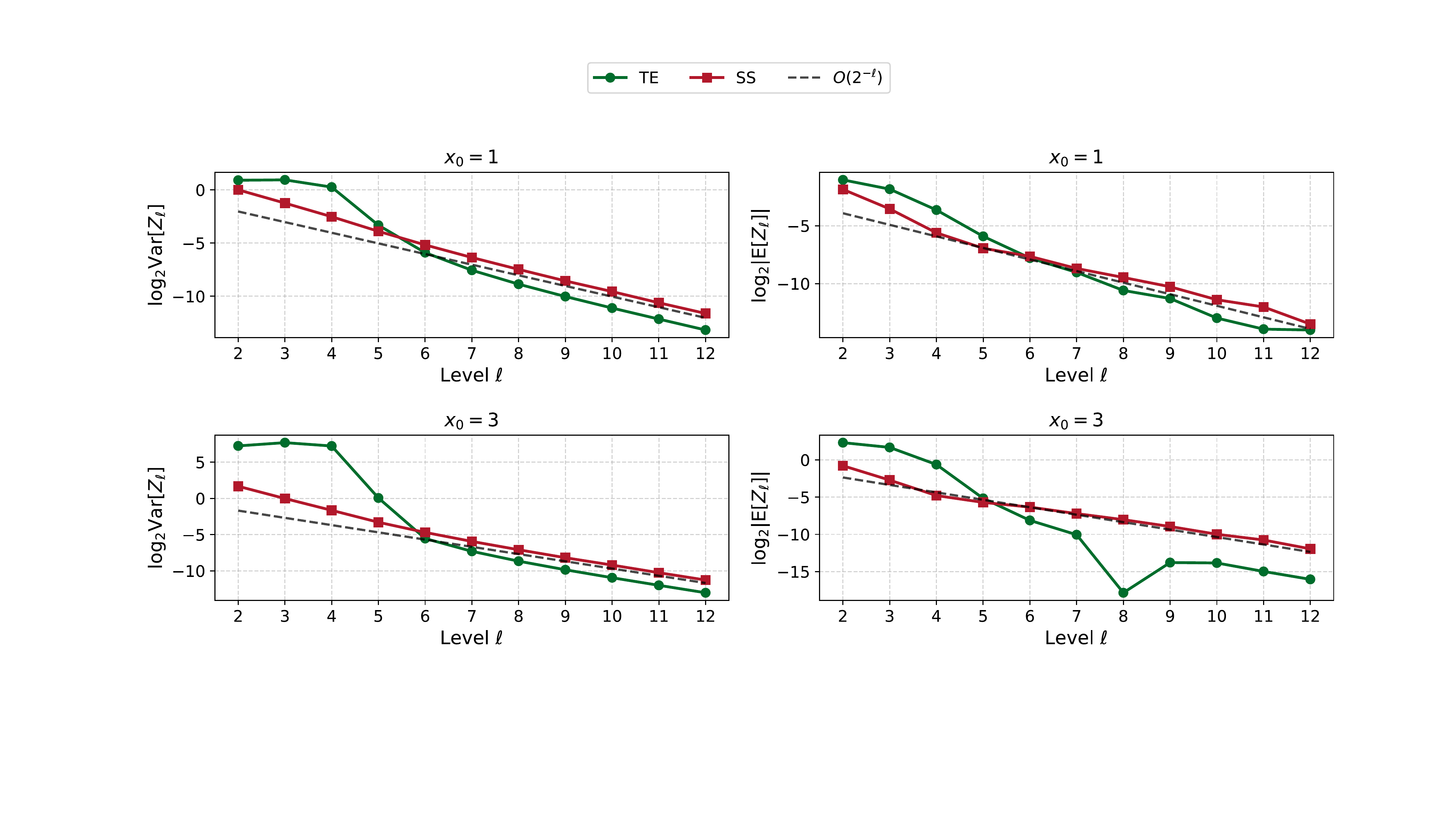}
    \caption{Variance (left) and absolute mean (right) of the MLMC corrections for the tamed Euler (TE) and the pathwise skew-symmetric (SS) schemes. The top and bottom panels correspond to $X_0 = 1$ and $X_0 = 3$. }
    \label{fig:var_mean_cpls}
\end{figure} 

\begin{figure}
    \centering
    \includegraphics[width=1.0\linewidth]{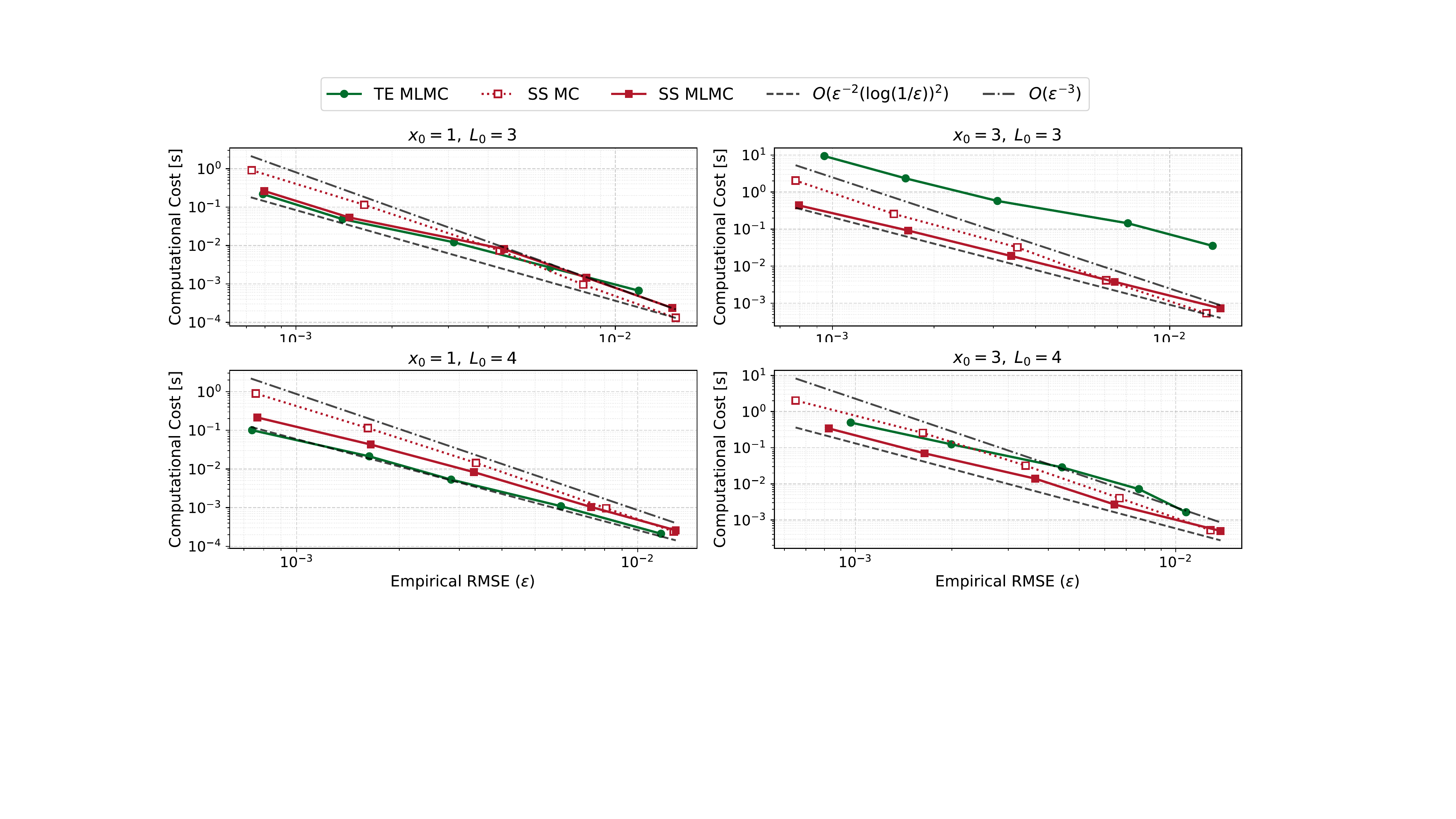}
    \caption{Computational cost versus empirical RMSE for TE-MLMC, SS-MLMC, and standard Monte Carlo based on SS (SS-MC), applied to the stochastic Ginzburg–Landau model \eqref{eq:SGL} with $\beta = \gamma = 1$. The target expectation is $\mathbb{E}_{X_0} [(X_1)^2]$. }
    \label{fig:cost_vs_rmse}
\end{figure} 

Figure \ref{fig:var_mean_cpls} plots $\mathrm{Var} \bigl[Z_\ell \bigr]$ and $| \mathbb{E} \bigl[Z_\ell \bigr] |$, $Z_\ell := f (\bar{X}^{[1]}_{K_{\ell}}) - f (\bar{X}^{[1]}_{K_{\ell-1}})$ for $\ell = 2, 3, \ldots, 12$ under the initial conditions $X_0 = 1$ and $X_0 = 3$. Notably, the SS estimator indicates moderate decays across all levels for both initial conditions, while the TE's variance is more sensitive to $X_0$ and has substantially larger values at the coarser levels, particularly for the $X_0 = 3$ case -- as expected from the directional stability analysis in Section \ref{sec:inward_drift}. After a crossover at finer levels, the TE correction variance becomes slightly smaller than that of SS, which is qualitatively consistent with the analysis concerning the linear test function in Section \ref{sec:var.multilevel}. 

Furthermore, Figure \ref{fig:cost_vs_rmse} shows the pairs of computational costs against empirical RMSEs with \{TE, SS\}-MLMC and standard MC using SS (SS-MC) under four scenarios $(X_0, L_0) \in \{ (1, 3), (1, 4), (3, 3), (3,4) \}$. In this figure, the cost of MLMC and RMSE are computed as
\begin{align*} 
(\mathrm{Cost}) = \sum_{\ell = L_0}^{L} N_\ell C_\ell, \quad 
(\mathrm{RMSE}) = \sqrt{\tfrac{1}{100} \sum_{i = 1}^{100} \bigl( Y^{[i]} - Y^\dagger \bigr)^2 },
\end{align*} 
where $Y^{[i]}$ is the $i$-th independent realisation of \{TE, SS\}- MLMC and $Y^\dagger$ denotes the reference value of $\mathbb{E}_{X_0} [(X_T)^2]$, estimated by standard MC using $5,000,000$ paths of the TE scheme with $2^{16}$ time steps. 

We observe that SS-MLMC produces reliable estimates for all scenarios and exhibits approximately $\mathcal{O}\bigl(\varepsilon^{-2} (\log (1/\varepsilon))^2 \bigr)$ cost growth -- consistent with Corollary \ref{cor:mlmc} -- whereas SS-MC's cost is approximately $\mathcal{O} (\varepsilon^{-3})$. In comparison, TE-MLMC is more sensitive to the model setting and the choice of the coarsest level $L_0$ due to the higher variance in coarse levels, though it becomes competitive with SS-MLMC under an appropriate choice of $L_0$ in the case $X_0 = 1$. Thus, the experiment demonstrates the greater robustness of SS-MLMC for the SDE with multiplicative noise \eqref{eq:SGL}.

\section{Conclusion} We have proposed a path-wise formulation of the skew-symme-\\
tric discretisation obtained by representing the skew-normal increment through a pair of independent Gaussians and coupling these with the Brownian motion driving the SDE. The resulting scheme is fully explicit and inherits the robustness of the distributional scheme of \cite{skew:ima} to superlinearly growing drift, while in addition admitting a strong error analysis. We established mean-square convergence of order $1/2$ (Theorem \ref{thm:main_strong}) and deduced that the associated multilevel estimator achieves a mean-square error $\varepsilon^2$ at cost $O\big(\varepsilon^{-2}(\log(1/\varepsilon))^2\big)$ (Corollary \ref{cor:mlmc}).

Our comparison with the tamed Euler scheme shows that the two methods differ most when the state is far from the stable region. The tamed drift increment is bounded uniformly, so the restoring force available to a single step does not grow as the state moves outwards, whereas the skew-symmetric increment contracts in proportion to the state, as the wrong direction probabilities (Proposition \ref{prop:large_state}) and the normalised mean increments (Table \ref{tab:one_step_signed_mean}) reflect. Empirically, the consequence is coarse-level coupling variances that are markedly less sensitive to the initial condition. At small step sizes, the ordering reverses (Corollary \ref{cor:comparison}). This could motivate a hybrid strategy, using the skew-symmetric scheme at coarse levels and the tamed Euler scheme at fine ones, which we leave as a direction for future work.

Two more directions seem natural, and we are currently pursuing them. The scheme as currently defined requires the uniform ellipticity of Assumption \ref{assump:vol}.I \\
through the inverse $a^{-1}$ appearing in $\alpha$ in (\ref{def:alpha}). However, many applications that motivate robust discretisation involve hypoelliptic dynamics, such as kinetic Langevin with a superquadratic potential. A second direction would be to use skew-symmetric schemes to construct splitting schemes. Part of the drift could be handled by an Ornstein--Uhlenbeck step and the remainder by skewing, with the aim of restoring some of the mobility that the skew-symmetric scheme,  similar to taming, gives up in exchange for robustness.

\subsection*{Acknowledgements} The authors would like to thank Matteo Croci and 
Michael Tretyakov for useful discussions. YI was supported by EPSRC 
(EP/Y028783/1). The authors were supported by the Heilbronn Institute for Mathematical Research and the UKRI/EPSRC Additional Funding Programme for Mathematical Sciences (EP/V521917/1).

\appendix
\section{Proof of Proposition \ref{prop:one_step_s}} \label{sec:pf_prop}

Equation \eqref{eq:local_mean} is immediate from $\mathbb{E}_x  \bigl[ \widetilde{X}_1 \bigr] = \mathbb{E}_x \bigl[ \bar{X}_1 \bigr]$ due to the coupling and then the local weak convergence of the distributional scheme \eqref{eq:ss-update} for elliptic diffusions, which is shown in \cite[Proposition 3.1]{skew:ima} under Assumptions \ref{assump:drift}, \ref{assump:vol} and \ref{assump:deriv_poly}. 

We therefore focus on proving (\ref{eq:local_Lp}). The Brownian coupling (\ref{eq:pathwise_ss}) shows that $X_h - \widetilde{X}_1 =  T_1 + T_2 + T_3$ with 
%
%
%
\begin{gather*}
T_1 := \int_0^h V_0 (X_t) \d t, \quad 
T_2 := \sum_{i = 1}^{d_W} \int_0^h \bigl( V_{i} (X_t) -  V_{i} (x) \bigr) \d W_{i, t}; \\ 
T_3 := \sum_{i = 1}^{d_W} V_{i} (x) \left\{  \left( 1 -  \frac{1}{\sqrt{1 + \tilde{\alpha}_i (h, x)^2}} \right) W_{i, h} 
- \frac{\tilde{\alpha}_i (h, x)}{\sqrt{1 + \tilde{\alpha}_i (h, x)^2}} | \widetilde{W}_{i, h} |  \right\}. 
\end{gather*} 
This implies
\begin{align*}
\mathbb{E}_x \bigl[ \bigl| X_h - \widetilde{X}_1  \bigr|^{2r} \bigr] 
\le C \sum_{k = 1}^3 \mathbb{E}_x \bigl[ | T_k |^{2r} \bigr] 
\end{align*}
for some $C = C(r) > 0$. Our goal will be to upper bound the quantities $\mathbb{E}_x \bigl[ | T_k |^{2r} \bigr]$ for each $k = 1, 2, 3$. In order to do that, we will use the fact that under Assumptions \ref{assump:drift} and \ref{assump:vol}, for any suitably large $p \ge 1$, there exists a constant $C>0$ such that for all $x \in \mathbb{R}^d$
\begin{align} \label{eq:bd_m_sde}
\sup_{t \in [0, T]}  \mathbb{E}_x [ |X_t|^{2p}] 
\le C ( 1 + |x|^{2p}),
\end{align} 
see e.g. \cite{Tret:13}. For the first term, applying Jensen's inequality gives
\begin{align*}
& \mathbb{E}_x \bigl[ | T_1 |^{2r} \bigr] 
 = h^{2r} \cdot \mathbb{E}_x \left[ \Bigl| \frac{1}{h} \int_0^h V_0  (X_t) dt \Bigr|^{2r} \right] \\
& \quad 
\le  h^{2r - 1} \int_0^h \mathbb{E}_x \bigl[ | V_0  (X_t) |^{2r} \bigr]  \d t 
\le C  h^{2r - 1} \int_0^h \mathbb{E}_x \bigl[ 1 + |X_t|^{2 r q} \bigr]  \d t
\le C(1 + |x|^{2rq}) h^{2r}, 
\nonumber 
\end{align*}
for some constants $C > 0$ and $q \ge 1$, where in the last inequality we used Assumption \ref{assump:deriv_poly} and used the moment estimate \eqref{eq:bd_m_sde} at a sufficiently large moment order. 

Subsequently, for the second term, we employ the Burkholder-Davis-Gundy inequality, Assumption \ref{assump:vol}-II and Jensen's inequality to get 
\begin{align*}
\mathbb{E}_x\bigl[|T_2|^{2r}\bigr]
&\leq
C_1\,
\mathbb{E}_x\left[
\left(
\int_0^h
\sum_{i=1}^{d_W}
|V_i(X_t)-V_i(x)|^2\,\d t
\right)^r
\right]
\\
&\leq
C_2\,
\mathbb{E}_x\left[
\left(
\int_0^h |X_t-x|^2\,\d t
\right)^r
\right]
\leq
C_2 h^{r-1}
\int_0^h
\mathbb{E}_x\bigl[|X_t-x|^{2r}\bigr]\,\d t
\\
&\leq
C_3(1+|x|^{2rq})h^{2r}.
\end{align*}
%
for some constants $C_1, C_2, C_3 > 0$ and $q \ge 1$, where in the last line we have used \eqref{eq:bd_m_sde} with a sufficiently large moment order, together with the inequality
\begin{align} \label{eq:bd_diff_X}
\mathbb{E}_x \bigl[ | X_t - x |^{2r} \bigr] \le C ( 1 + |x|^{2rq}) t^{r},  
\end{align}  
which holds under the stated conditions. 
%
For the third term, we have that 
\begin{align}
\mathbb{E}_x \bigl[ |T_3|^{2r} \bigr] 
& \le C \times \bigl( \mathcal{E}^I + \mathcal{E}^{II} \bigr)  
\end{align}
for some $C = C(r) > 0 $ with 
\begin{align}
\mathcal{E}^I & := \mathbb{E} \left[ 
\left| \sum_{i = 1}^{d_W} 
V_{i} (x)  \left( 1 - \frac{1}{\sqrt{1 + \tilde{\alpha}_i (h, x)^2}} \right) W_{i, h} \right|^{2r} \right],  \\ 
\mathcal{E}^{II} & := \mathbb{E} \left[ 
\left| \sum_{i = 1}^{d_W} 
V_{i} (x)  \left( \frac{\tilde{\alpha}_i (h, x)}{\sqrt{1 + \tilde{\alpha}_i (h, x)^2}} \right) |\widetilde{W}_{i, h}| \right|^{2r} \right]. 
\end{align} 
For the term $\mathcal{E}^I$, we recall the definition of $\tilde{\alpha}_{i} (h,x)$ in (\ref{eq:skewness}) and apply the Rosenthal inequality (e.g. \cite{Rose:70} or \cite[Lemma A.2.]{Soni:25}) to get 
\begin{align*}
\mathcal{E}^I 
& \le C_1   \sum_{i = 1}^{d_W} \mathbb{E} \left[
\Bigl| V_{i} (x)  \Bigl( 1 - \frac{1}{\sqrt{1 + \tilde{\alpha}_i (h, x)^2}} \Bigr) W_{i, h}  \Bigr|^{2r} \right] \\ 
& \quad  + C_1 \Biggl( \sum_{i = 1}^{d_W} 
\mathbb{E} \left[
\Bigl| V_{i} (x)  \Bigl( 1 - \frac{1}{\sqrt{1 + \tilde{\alpha}_i (h, x)^2}} \Bigr) W_{i, h}  \Bigr|^{2} \right] \Biggr)^{r} 
\le C_2 (1 + |x|^{2qr}) h^{3r}, 
\end{align*}
for some constants $C_1, C_2 > 0$ and $q \ge 1$, where we have used that 
\begin{align}
1 - \frac{1}{\sqrt{1 + \tilde{\alpha}_i (h, x)^2}} =  \frac{\sqrt{1 + \tilde{\alpha}_i (h, x)^2}- 1}{\sqrt{1 + \tilde{\alpha}_i (h, x)^2}} \le \frac{\tilde{\alpha}_i (h, x)^2}{2}
\le C ( 1 + |x|^m) h,
\end{align}
for some $m \ge 1$, which follows from Assumptions \ref{assump:drift}, \ref{assump:vol}, \ref{assump:deriv_poly} and the inequality $\sqrt{1 + \xi} - 1 \le \tfrac{\xi}{2}$ for $\xi \ge 0$. Finally, for the term $\mathcal{E}^{II}$, we express $\widetilde{W}_{i,h}$ as $\sqrt{h} Z_i, \, i = 1,\ldots d_W$ with $Z = (Z_1, \ldots, Z_{d_W}) \sim N(0,I_{d_W})$ from which it can be deduced that
\begin{align*}
\mathcal{E}^{II} & \le 
C 
\Bigl| \sum_{i = 1}^{d_W} 
V_{i} (x)  \Bigl( \tfrac{\tilde{\alpha}_i (h, x)}{\sqrt{1 + \tilde{\alpha}_i (h, x)^2}} \Bigr) \sqrt{h} \mathbb{E} |Z_i| \Bigr|^{2r} \\
& \quad + C \mathbb{E} \Bigl[ 
\Bigl| \sum_{i = 1}^{d_W} 
V_{i} (x)  \Bigl( \tfrac{\tilde{\alpha}_i (h, x)}{\sqrt{1 + \tilde{\alpha}_i (h, x)^2}} \Bigr) \sqrt{h} (|Z_i| - \mathbb{E} |Z_i| )\Bigr|^{2r} \Bigr] =: \mathcal{E}^{II}_1 + \mathcal{E}_2^{II} 
\end{align*} 
for some $C = C(r) > 0$. It then follows from (\ref{eq:skewness}) that under Assumptions \ref{assump:drift}, \ref{assump:vol} and \ref{assump:deriv_poly}, 
\begin{align} \label{eq:E_II_1}
\mathcal{E}_1^{II} \le C (1 + |x|^{2rq}) h^{2r}, 
\end{align}
for some constants $C > 0, \, q \ge 1$. For the term $\mathcal{E}_2^{II}$, we apply the Rosenthal inequality 
to obtain 
\begin{align}
\mathcal{E}_2^{II} 
& \le C \sum_{i = 1}^{d_W} \mathbb{E} \left[ \Bigl| V_i (x)  \Bigl( \tfrac{\tilde{\alpha}_i (h, x)}{\sqrt{1 + \tilde{\alpha}_i (h, x)^2}} \Bigr)  \sqrt{h} (|Z_i| - \mathbb{E} |Z_i| ) \Bigr|^{2r} \right] \\
& \qquad + C 
\Bigl( \sum_{i = 1}^{d_W} \mathbb{E} \left[ \Bigl| V_{i} (x)  \Bigl( \tfrac{\tilde{\alpha}_i (h, x)}{\sqrt{1 + \tilde{\alpha}_i (h, x)^2}} \Bigr) \sqrt{h} (|Z_i| - \mathbb{E} |Z_i| ) \Bigr|^{2} \right] 
\Bigr)^r,   
\end{align} 
for $C > 0$. 
We then see that $\mathcal{E}_2^{II} $ is bounded as the right hand side of (\ref{eq:E_II_1}) by noticing that $\mathbb{E} \Bigl[ \bigl|  |Z_i| - \mathbb{E} |Z_i| \bigr|^{2k} \Bigr]$ is finite for all $k \in \mathbb{N}$ due to the sub-Gaussianity of $|Z_i| - \mathbb{E} |Z_i|$ (e.g. Theorems 2.6 \& 2.26 in \cite{Wain:19}). We therefore complete the proof of (\ref{eq:local_Lp}) and conclude \cref{prop:one_step_s}.  $\Box$

\section{Proof of Proposition \ref{prop:large_state}} \label{sec:pf_large_state}

We start with the case of the tamed Euler scheme. We notice that the events $\{ B_i (\varepsilon, x) \}_i$ are independent because each component of the scheme is driven by independent Gaussian variables, meaning
\begin{align*}
\mathbb{P} \Bigl( \bigcup_{i \in \Lambda} B_i (\varepsilon, x)  \Bigr) & = 1 - \prod_{i \in \Lambda} \mathbb{P} \Bigl(\bigl(B_i (\varepsilon, x)\bigr)^c  \Bigr) \\ 
& = 1 - \prod_{i \in \Lambda} \Phi \Bigl( \tfrac{1}{\sqrt{h} |\sigma_i (x)|} \bigl( \varepsilon - \tfrac{h b_i(x)}{1 + h | b_i (x)|} \mathrm{sign} (x_i) \bigr) \Bigr),  
\end{align*} 
where $z \mapsto \Phi (z)$ is the CDF of a $N(0,1)$ random variable. 
If $\lim_{\rho_\Lambda(x) \to \infty}|\sigma_i(x)| = \widetilde{\sigma}_i \in (0, \infty]$, then the limit \eqref{eq:te_limit} is immediate from the above equation. 
%
%

Subsequently, we consider the skew-symmetric scheme. Again, the independence of the events $\{A_i (\varepsilon, x)\}_i$ gives 
\begin{align*}
& \mathbb{P} \Bigl( \bigcup_{i \in \Lambda} A_i (\varepsilon, x) \Bigr) \\ 
& \quad = 1 - \prod_{i \in \Lambda} \mathbb{E}_{\eta_i} 
\Bigl[ \Phi \Bigl(\tfrac{\varepsilon}{\sqrt{h} |\sigma_i (x)|} \sqrt{1 + \tilde{\alpha}_i (h, x)^2} - \sqrt{\tfrac{\pi}{2}} \tfrac{b_i (x)}{|\sigma_i (x)|} \sqrt{h} | \eta_i | \mathrm{sign} (x_i) \Bigr) \Bigr]. 
\end{align*}
Because of the boundedness of the normal CDF, the dominated convergence theorem allows the exchange of the limit and the expectation, leading to the conclusion that for any fixed $d$ and non-empty set $\Lambda \subseteq \{1, \ldots, d\}$,  
\begin{align}
\lim_{\rho_\Lambda (x)  \to \infty} 
\mathbb{P} \Bigl( \bigcup_{i \in \Lambda} A_i (\varepsilon, x) \Bigr) = 1 - \prod_{i \in \Lambda} \mathbb{E}_{\eta_i} \Bigl[ \Phi (\infty)  \Bigr] = 0, 
\end{align} 
which completes the proof. $\Box$
\section{Proof of \cref{thm:main}} \label{sec:pf_var_cpl}
(i) Set $d_\eta(y) := \eta b(y)/(1+\eta |b(y)|)$ to be the mean increment of the tamed Euler scheme.  Note that $|d_\eta(y)| \le 1$ for all $y$, meaning $d_\eta(y)/x \to 0$ as $x \to \infty$ .  The coarse update therefore satisfies
\[
\frac{Y_h^c(x)}{x}
=
1 + \frac{d_h(x)}{x} + \gamma \sqrt{h}G_c \rightarrow
1 + a(h)(G_1 + G_2)
\]
in $L^2$.  Next, for the first fine half-step,
\[
\frac{Y_1(x)}{x}
:=
\frac{T_{h/2}(x;G_1)}{x}
=
1 + \frac{d_{h/2}(x)}{x} + a(h) G_1
\rightarrow
1 + a(h) G_1
\]
in $L^2$.  The second half-step can be written
\[
\frac{Y_{h/2}^f(x)}{x}
=
\frac{Y_1(x)}{x}
+
\frac{d_{h/2}(Y_1(x))}{x}
+
a(h)\frac{Y_1(x)}{x}\,G_2.
\]
Again, $|d_{h/2}(Y_1(x))|/x \le 1/x \to 0$, and since $Y_1(x)/x \to 1+a(h)G_1$ in $L^2$
\[
\frac{Y_{h/2}^f(x)}{x}
\rightarrow
1 + a(h) G_1 + a(h)(1 + a(h) G_1) G_2
=
1 + a(h)(G_1 + G_2) + a(h)^2 G_1 G_2
\]
in $L^2$.  Subtracting the coarse limit gives
\[
\frac{Y_{h/2}^f(x) - Y_h^c(x)}{x}
\rightarrow
a(h)^2 G_1 G_2.
\]
in $L^2$.  Since $G_1G_2$ has variance $1$, the variance limit follows.  For (ii), observe that as $x \to\infty$
\[
\frac{b(x)}{\gamma x} = \frac{-x^2 + \beta}{\gamma} \to -\infty,
\]
so for each fixed $\eta>0$, $\tilde{\alpha}(\eta, x)\to -\infty$, $\beta_\eta(x)\to 0$, and $\delta_\eta(x)\to -1$ as $x \to \infty$.  Define
\[
M_x := \frac{S_{h/2}(x;U_1,V_1)}{x}
=
1 + a(h)\bigl(\beta_{h/2}(x)V_1 + \delta_{h/2}(x)|U_1|\bigr).
\]
By the boundedness of $\beta_{h/2}$ and $\delta_{h/2}$, the finiteness of all Gaussian moments, and dominated convergence,
$M_x \to 1-a(h)|U_1|$ in $L^4$.
Next, note that
\[
\frac{\tilde{X}_{h/2}^f(x)}{x}
=
M_x
\Bigl[
1 + a(h)\bigl(
\beta_{h/2}(xM_x)V_2
+
\delta_{h/2}(xM_x)|U_2|
\bigr)
\Bigr].
\]
Since $|M_x| \longrightarrow |1-a(h)|U_1||$ in probability, and the limiting random variable is non-zero almost surely, we have
$
|xM_x| \to \infty
$
in probability, meaning
$\beta_{h/2}(xM_x)\to 0$ and $\delta_{h/2}(xM_x)\to -1$
in probability. Since these coefficients are uniformly bounded,
\[
|\beta_{h/2}(xM_x)V_2|^4\le |V_2|^4,
\]
and
\[
\bigl|(\delta_{h/2}(xM_x)+1)|U_2|\bigr|^4
\le 16|U_2|^4.
\]
Both dominating random variables are integrable. By convergence in probability together with dominated convergence, therefore,
\[
\beta_{h/2}(xM_x)V_2\to0,
\qquad
\delta_{h/2}(xM_x)|U_2|\to-|U_2|
\]
in $L^4$. Applying H\"older's inequality then gives
\[
\frac{\tilde{X}_{h/2}^f(x)}{x}
\longrightarrow
(1-a(h)|U_1|)(1-a(h)|U_2|)
\]
in $L^2$.  For the coarse step,
\[
\frac{\tilde{X}_h^c(x)}{x}
=
1 + \gamma \sqrt{h}\bigl(\beta_h(x)V_c + \delta_h(x)|U_c|\bigr)
\rightarrow
1 - a(h)|U_1+U_2|
\]
in $L^2$.  Subtracting gives
\[
\frac{\tilde{X}_{h/2}^f(x) - \tilde{X}_h^c(x)}{x}
\rightarrow
(1 - a(h)|U_1|)(1 - a(h)|U_2|) - \bigl(1 - a(h)|U_1+U_2|\bigr),
\]
which simplifies to the first result of (ii). For the variance, let
\[
Z := |U_1+U_2| - |U_1| - |U_2|,
\quad
W := |U_1||U_2|.
\]
Then 
\[
\mathbb{V}_{\mathrm{sk}}(h)
=
\text{Var}(a(h)Z + a(h)^2 W)
=
a(h)^2 \text{Var}(Z) + 2 a(h)^3 \text{Cov}(Z,W) + a(h)^4 \text{Var}(W).
\]
Direct calculation gives
\[
\text{Var}(Z) = 2 - \frac{16}{\pi} + \frac{8\sqrt{2}}{\pi},
\quad
\text{Cov}(Z,W) = \frac{-4 - \pi + 4\sqrt{2}}{\pi^{3/2}},
\quad
\text{Var}(W) = 1 - \frac{4}{\pi^2}.
\]
Substituting these identities completes the proof. $\Box$

\bibliographystyle{plain}
\bibliography{references}

\begin{thebibliography}{10}

\bibitem{azzalini2013skew}
Adelchi Azzalini.
\newblock {\em {The Skew-Normal and Related Families}}, volume~3.
\newblock Cambridge University Press, 2013.

\bibitem{Fang:20}
Wei Fang and Michael~B. Giles.
\newblock {Adaptive {E}uler–{M}aruyama method for {SDE}s with nonglobally {L}ipschitz drift}.
\newblock {\em {The Annals of Applied Probability}}, 30(2), 2020.

\bibitem{Giles:08}
Michael~B. Giles.
\newblock {Multilevel {M}onte {C}arlo path simulation}.
\newblock {\em {Operations Research}}, 56(3):607–617, 2008.

\bibitem{giles:15}
Michael~B. Giles.
\newblock {M}ultilevel {M}onte {C}arlo methods.
\newblock {\em Acta Numerica}, 24:259–328, April 2015.

\bibitem{Hen:86}
Norbert Henze.
\newblock {A probabilistic representation of the `skew-normal' distribution}.
\newblock {\em {Scandinavian Journal of Statistics}}, pages 271--275, 1986.

\bibitem{Hutzenthaler.Jentzen.Kloeden:2011}
Martin Hutzenthaler, Arnulf Jentzen, and Peter~E. Kloeden.
\newblock {Strong and weak divergence in finite time of {Euler's} method for stochastic differential equations with non-globally {Lipschitz} continuous coefficients}.
\newblock {\em { {{Proceedings of the Royal Society A: Mathematical, Physical and Engineering Sciences}} }}, 467(2130):1563--1576, 2011.

\bibitem{hutzenthaler.jentzen.lloeden:12}
Martin Hutzenthaler, Arnulf Jentzen, and Peter~E. Kloeden.
\newblock {Strong convergence of an explicit numerical method for {SDEs} with non-globally {Lipschitz} continuous coefficients}.
\newblock {\em {The Annals of Applied Probability}}, 22(4):1611--1641, 2012.

\bibitem{Hut:13}
Martin Hutzenthaler, Arnulf Jentzen, and Peter~E Kloeden.
\newblock {Divergence of the multilevel {M}onte {C}arlo {E}uler method for nonlinear stochastic differential equations}.
\newblock {\em {The Annals of Applied Probability}}, 23(5):1913--1966, 2013.

\bibitem{skew:ima}
Yuga Iguchi, Samuel Livingstone, Nikolas N{\"u}sken, Giorgos Vasdekis, and Rui-Yang Zhang.
\newblock {Skew-symmetric schemes for stochastic differential equations with non-{L}ipschitz drift: an unadjusted {B}arker algorithm}.
\newblock {\em {IMA Journal of Numerical Analysis}}, 2026.

\bibitem{karatzas2014brownian}
Ioannis Karatzas and Steven Shreve.
\newblock {\em {Brownian Motion and Stochastic Calculus}}.
\newblock Springer, 2014.

\bibitem{krauth2006statistical}
Werner Krauth.
\newblock {\em {Statistical Mechanics: Algorithms and Computations}}, volume~13.
\newblock OUP Oxford, 2006.

\bibitem{livingstone.zanella:22}
Samuel Livingstone and Giacomo Zanella.
\newblock {The {Barker} proposal: Combining robustness and efficiency in gradient-based {MCMC}}.
\newblock {\em { {{Journal of the Royal Statistical Society Series B: Statistical Methodology}} }}, 84(2):496--523, 2022.

\bibitem{mao:15}
Xuerong Mao.
\newblock { {{The truncated {Euler}--{Maruyama} method for stochastic differential equations}} }.
\newblock {\em {Journal of Computational and Applied Mathematics}}, 290:370--384, 2015.

\bibitem{Roberts.Tweedie:96}
Gareth~O. Roberts and Richard~L. Tweedie.
\newblock {Exponential convergence of {Langevin} distributions and their discrete approximations}.
\newblock {\em {Bernoulli}}, 2(4):341--363, 1996.

\bibitem{Rose:70}
Haskell~P Rosenthal.
\newblock {On the subspaces of ${L}_p (p > 2)$ spanned by sequences of independent random variables}.
\newblock {\em {Israel Journal of Mathematics}}, 8(3):273--303, 1970.

\bibitem{sabanis:13}
Sotirios Sabanis.
\newblock {A note on tamed {Euler} approximations}.
\newblock {\em {Electronic Communications in Probability}}, 18:1--10, 2013.

\bibitem{Soni:25}
Simran Soni, Chaman Kumar, Goncalo~dos Reis, et~al.
\newblock {Tamed {E}uler approximation for fully superlinear growth {M}c{K}ean-{V}lasov {SDE} and their particle systems: sharp rates for strong propagation of chaos, convergence and ergodicity}.
\newblock {\em {arXiv preprint arXiv:2510.16427}}, 2025.

\bibitem{szpruch2011numerical}
Lukasz Szpruch, Xuerong Mao, Desmond~J Higham, and Jiazhu Pan.
\newblock {Numerical simulation of a strongly nonlinear Ait-Sahalia-type interest rate model}.
\newblock {\em {BIT Numerical Mathematics}}, 51(2):405--425, 2011.

\bibitem{Tret:13}
M.~V. Tretyakov and Z.~Zhang.
\newblock {A fundamental mean-square convergence theorem for {SDE}s with locally {L}ipschitz coefficients and its applications}.
\newblock {\em {SIAM Journal on Numerical Analysis}}, 51(6):3135–3162, 2013.

\bibitem{Wain:19}
Martin~J. Wainwright.
\newblock {\em {High-{D}imensional {S}tatistics: {A} {N}on-{A}symptotic {V}iewpoint}}.
\newblock Cambridge University Press, 2019.

\end{thebibliography}

\end{document}